\numberwithin{equation}{section}
\theoremstyle{plain}
\newtheorem{theorem}{Theorem}[section]
\newtheorem{proposition}[theorem]{Proposition}
\newtheorem{cor}[theorem]{Corollary}
\newtheorem{lemma}[theorem]{Lemma}
\theoremstyle{definition}
\newtheorem{dfn}[theorem]{Definition}
\theoremstyle{remark} 
\newtheorem{rmk}[theorem]{Remark}
\newcommand{\M}{\mathcal{M}}
\newcommand{\meandering}{J_{\operatorname{m}}}
\newcommand{\radial}{J_{\operatorname{r}}}
\author{Vasiliki Evdoridou}
\author{Lasse Rempe-Gillen}
\address{School of Mathematics \& Statistics \\ The Open University \\
Walton  Hall \\ Milton Keynes MK7 6AA \\ UK}
\email{vasiliki.evdoridou@open.ac.uk}
\address{Dept. of Mathematical Sciences \\
	 University of Liverpool \\
   Liverpool L69 7ZL\\
   UK\\ ORCiD: 0000-0001-8032-8580}
\email{l.rempe@liverpool.ac.uk}
\thanks{The second author was supported by a Philip Leverhulme Prize}
\subjclass[2010]{37F10 (primary), 30D05, 54G15 (secondary)}
\begin{document}
\title{Non-escaping endpoints do not explode}

\begin{abstract}
  The family of exponential maps $f_a(z)= e^z+a$ is of fundamental importance in the study of transcendental dynamics. 
   Here we consider the topological structure of certain subsets of the Julia set $J(f_a)$. 
   When $a\in (-\infty,-1)$, and more generally when $a$ belongs to the Fatou set $F(f_a)$,
  it is known that $J(f_a)$ can be written as a union of \emph{hairs} and \emph{endpoints} of these hairs. 
   In 1990, Mayer proved for  $a\in (-\infty,-1)$ that, while 
    the set of 
    endpoints is totally separated,  
    its union with infinity is a connected set. Recently, 
    Alhabib and the second author extended this result
    to the case where $a \in F(f_a)$, and 
    showed that it holds even
    for the smaller set of all 
    \emph{escaping}
    endpoints.

    We show that, in contrast, the set of \emph{non-escaping} endpoints together with infinity is totally separated. 
    It turns out that this property is closely related to a topological structure known as a `spider's web'; 
    in particular we give a 
    new topological characterisation of spiders' webs that may be of independent interest. We also show how our results
    can be applied to \emph{Fatou's function}, $z\mapsto z + 1 + e^{-z}$. 
\end{abstract}
\maketitle

\section{Introduction}
  The iteration of transcendental entire functions was initiated by Fatou in 1926~\cite{fatou}, and has received
    considerable attention in recent years. The best-studied examples are provided
     by the functions 
            \begin{equation}\label{eqn:exponentialmaps} f_a \colon \C\to \C; \quad z\mapsto e^z+a \end{equation}
    for $a\in (-\infty,-1)$. (See \cite{devaney-84,devaney-krych84,aartsoversteegen,karpinskaparadox}.) In this case, there is a unique attracting fixed point $\zeta$ on 
    the negative real axis. The (open) set of starting values whose orbits under $f_a$ converge to  $\zeta$ under
    iteration is connected and dense in the plane. (See Figure~\ref{subfig:disjointtype}.) 

   The complement of this basin of attraction, the \emph{Julia set} $J(f_a)$, is known 
   to be an uncountable union of pairwise disjoint arcs, known as ``hairs'',
   each of which joins a finite \emph{endpoint} to infinity.
    More precisely, $J(f_a)$ is what is known as a ``Cantor bouquet''; see  
    \cite{aartsoversteegen} and \cite{baranski-jarque-rempe12} for further information. The action of $f_a$ on $J(f_a)$ 
    provides the simplest (yet far from simple) transcendental entire dynamical system. 
    Results first established in this context have often led to an increased understanding 
    in far more general settings. 

\begin{figure}
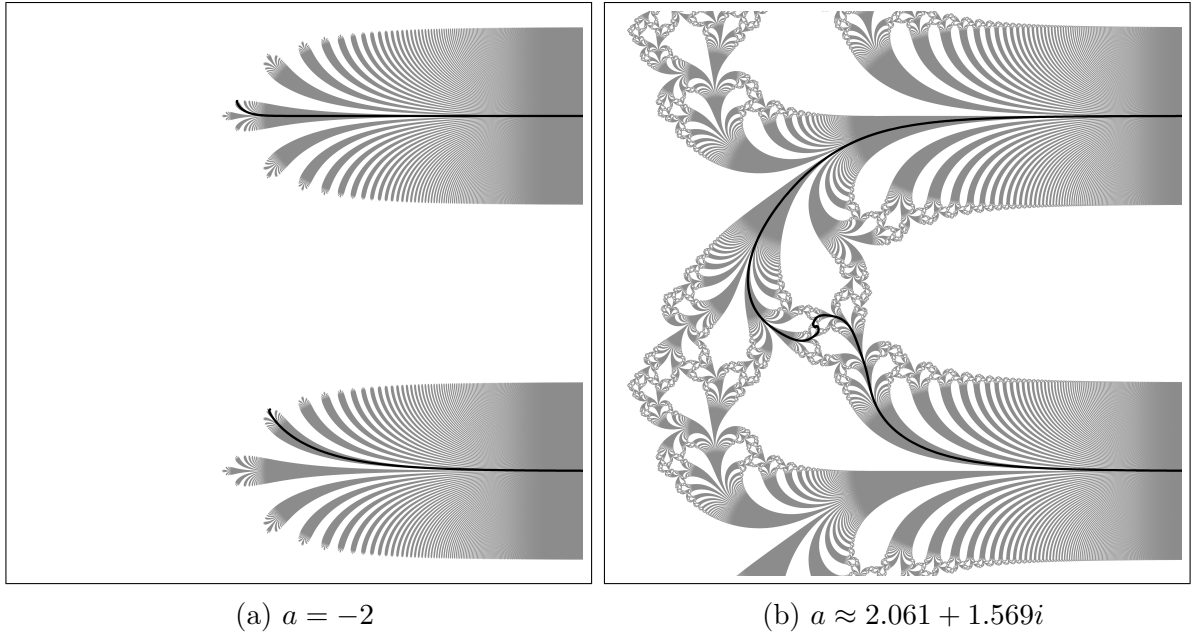

  \begin{subfigure}[b]{.5\linewidth}
    \fbox{%
    \includegraphics[width=.95\textwidth]{Disjoint-Type-Exponential-rays-v2}}
    \caption{$a=-2$}\label{subfig:disjointtype}
  \end{subfigure}%%
\hfill
  \begin{subfigure}[b]{.5\linewidth}
    \fbox{%
    \includegraphics[width=.95\textwidth]{Expo_Per3_rays-v2}}
    \caption{$a\approx 2.061 + 1.569i$}\label{subfig:per3}
  \end{subfigure}
  \caption{Two maps in the exponential family; the Julia set is shown in grey, while in each picture two individual hairs are shown in black. The white regions correspond to
     points which converge to an attracting periodic cycle under iteration. 
      In~\subref{subfig:disjointtype}, the Julia set is a Cantor bouquet, and different hairs have different endpoints. The map in~\subref{subfig:per3} has an attracting cycle of period $3$; several hairs share
      the same endpoint, as is the case for the two hairs shown.}
\end{figure}

  A topological model of $f_a|_{J(f_a)}$ was given in \cite{aartsoversteegen} in terms of a \emph{straight brush}, where the hairs of $J(f_a)$ are represented by straight horizontal
    rays. This model depended a priori
   on the parameter $a\in (-\infty, -1)$, but a natural version of the construction
   that is independent of $a$ was given in~\cite{Rem06}. 
   From this point of view,
    the topological dynamics in this case is completely understood, but the set $J(f_a)$ nonetheless exhibits a number of subtle and surprising properties (compare \cite{karpinskaparadox} for
    a celebrated and unexpected result concerning Hausdorff dimensions). 

  In particular, Mayer \cite{mayer90} proved in 1990 that the set $E(f_a)$ of endpoints of $f_a$ (see above) has the 
   intriguing property that
    $E(f_a)\cup\{\infty\}$ is connected, while 
   $E(f_a)$ itself is totally separated. 
    Here a totally separated space is defined as follows.
\begin{dfn}[Separation]\label{defn:separation}
 Let $X$ be a topological space. Two points $a, b \in X$ are \emph{separated} (in $X$) if there is an open and closed   
    subset $U \subset X$ with $a \in U$ and $b \notin U.$  If every pair of points in $X$ is separated we say that $X$ is a 
    \emph{totally separated} space.
\end{dfn}
If $X$ is connected but $X \setminus \{x_0\}$, $x_0 \in X$, is totally separated, then we say that $x_0$ is an \emph{explosion point} of $X$. 
   Hence, infinity is an explosion point for $E(f_a) \cup \{\infty\}, a<-1$. Following the terminology used in \cite{alhabib-rempe15}, we will also 
    simply say that infinity is an explosion point for $E(f_a)$.

Alhabib and the second author recently proved \cite[Theorem 1.3]{alhabib-rempe15}, that Mayer's result holds also for the smaller set of 
  \emph{escaping} endpoints of $J(f_a)$; that is, for the set $\tilde{E}(f_a)\defeq E(f_a)\cap I(f_a)$ of endpoints that belong to the escaping set 
\[ I(f_a)= \{z\in\C\colon  f_a^n(z) \to \infty\;\text{as}\;n \to \infty\}. \]
   Still assuming that $a\in (-\infty,-1)$, the complementary set of \emph{non-escaping endpoints} of $f_a$ satisfies the following
     identities:  
     \[ E(f_a) \setminus \tilde{E}(f_a) = J(f_a) \setminus I(f_a) = \radial(f_a); \]
    see Corollary~\ref{cor:radialJ} and Proposition~\ref{prop:hairs}. 
   Here $\radial(f_a)$ is the \emph{radial Julia set}, a set of particular importance.
   The results of \cite{alhabib-rempe15} naturally suggest the question whether $\infty$ 
    is an explosion point for $\radial(f_a)$ also. It is known that 
    $\radial(f_a)$ has Hausdorff dimension strictly greater than one \cite[Section~2]{urbanski-zdunik03}, which is compatible with this possibility. Nonetheless,
   we prove here that 
    the sets of escaping and non-escaping endpoints are topologically very different from each other.

 \begin{theorem}[Non-escaping endpoints do not explode]\label{thm:maindisjointtype}
    Let $a\in (-\infty,-1)$. Then $\radial(f_a)\cup \{\infty\}$ is totally separated. 
 \end{theorem}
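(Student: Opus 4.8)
The plan is to reduce Theorem~\ref{thm:maindisjointtype} to the following two assertions. First, the radial Julia set $\radial(f_a)$ is \emph{itself} totally separated. Second, the complementary set $\C\setminus\radial(f_a) = F(f_a)\cup I(f_a)$ contains a \emph{spider's web}: a sequence of bounded simply connected domains $G_1\subset G_2\subset\cdots$ with $\bigcup_{n} G_n = \C$ and $\partial G_n\cap\radial(f_a)=\emptyset$ for every $n$. The first assertion should be a routine consequence of known structure, since $\radial(f_a)\subseteq E(f_a)$ by Proposition~\ref{prop:hairs} and any subspace of a totally separated space is totally separated; alternatively it follows directly from the straight brush model of \cite{aartsoversteegen}, in which distinct endpoints lie at distinct ``heights'' and a horizontal line at a height occupied by no hair (such heights being dense) separates any two given endpoints while avoiding $J(f_a)$ altogether. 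The second assertion is where the bulk of the work lies, and is where the new topological characterisation of spiders' webs will do its job.

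Granting the two assertions, here is how the theorem follows. Write $Y = \radial(f_a)\cup\{\infty\}$, viewed as a subspace of $\CR$, and note first that whenever $G\subseteq\C$ is a bounded open set with $\partial G\cap\radial(f_a)=\emptyset$, the set $U = G\cap Y = G\cap\radial(f_a)$ is clopen in $Y$: it is open because $G$ is open in $\CR$, and it is closed because its closure in $Y$ is contained in $\overline{G}^{\,\CR}\cap Y = (G\cup\partial G)\cap\radial(f_a) = G\cap\radial(f_a) = U$, using that $G$ is bounded and that $\partial G$ misses $\radial(f_a)$. Now let $x,y\in Y$ be distinct; after interchanging them if necessary we may assume $y=\infty$ or that both are finite. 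If $y=\infty$, pick $n$ with $x\in G_n$; then $U=G_n\cap Y$ is a clopen subset of $Y$ containing $x$ but not $y$. If $x,y\in\radial(f_a)$, pick $n$ with $x,y\in G_n$; then $U=G_n\cap Y$ is a bounded clopen subset of $Y$, and since $U\subseteq\radial(f_a)$ it is totally separated by the first assertion, so there is a set $V$ clopen in $U$ with $x\in V$ and $y\notin V$; as $U$ is clopen in $Y$, so is $V$. In every case $x$ and $y$ are separated in $Y$, which is Theorem~\ref{thm:maindisjointtype}.

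The remaining step --- producing the spider's web in $\C\setminus\radial(f_a)$ --- is the one I expect to be the main obstacle. The difficulty is that neither of the two ``good'' sets through which the curves $\partial G_n$ may pass is by itself a spider's web: the Fatou set $F(f_a)$ is connected, but its natural cross-cuts, the boundaries of the preimages $f_a^{-k}(D(\zeta,\delta))$ of a small disk around the attracting fixed point $\zeta$ (boundaries which lie in $F(f_a)$ by complete invariance), are \emph{unbounded} --- a whole left half-plane lies in $F(f_a)$ --- so they are periodic fences rather than loops; and the escaping set $I(f_a)$ consists of hairs (minus some of their endpoints) running off to $+\infty$, which cannot surround anything on their own. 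So each loop $\partial G_n$ has to be assembled out of pieces of $F(f_a)$ --- where $\Re z$ is very negative, or where $\Im z$ is an odd multiple of $\pi$ and points are swept towards $\zeta$ --- closed off near $+\infty$ by arcs crossing the intervening hairs, and these crossings must be made \emph{away from the hairs' endpoints}, hence inside $I(f_a)$.

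What makes such an assembly possible is precisely the dynamical description $\radial(f_a)=J(f_a)\setminus I(f_a)$: a non-escaping endpoint has an orbit that returns to a bounded set infinitely often, so for every large $R$ it lies off the ``fast'' sub-bouquet $\{z\in J(f_a):|f_a^k(z)|\ge R\ \text{for all}\ k\ge 1\}$ that governs the dynamics near infinity, and it is this that stops $\radial(f_a)$ from blocking the large-scale routes. A naive ``perturb the curve to dodge the bad points'' argument will not be enough, because $\radial(f_a)$ has Hausdorff dimension strictly greater than one \cite{urbanski-zdunik03} and so can be locally non-trivial; the dynamical input is genuinely needed, and the role of the new characterisation of spiders' webs is to package exactly this input into the clean statement that $\C\setminus\radial(f_a)$ contains a spider's web.
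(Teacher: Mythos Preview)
Your overall strategy is correct and is exactly the paper's: reduce to (i) total separation of $\radial(f_a)$ as a subspace of $\C$, plus (ii) a spider's-web structure in $\C\setminus\radial(f_a)$ separating each finite point from $\infty$, then combine via your clopen argument (which is the content of Lemma~\ref{lem:totseparatedCh}). Your treatment of (i) and of the combination step is fine.

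The gap is that you do not prove (ii); you correctly flag it as the main obstacle and offer a heuristic, but no argument. This step is precisely Theorem~\ref{thm:fastseparation}, and the missing idea is a concrete construction that turns your heuristic into a proof. The paper does not attempt to build the loops $\partial G_n$ directly. Instead it takes an arc $\sigma\subset F(f_a)$ tending to $+\infty$ (Proposition~\ref{prop:attractingparabolic}), forms $\mathcal{M}=f_a^{-1}(\overline{D(a,\eps)}\cup\sigma)\subset F(f_a)$~--- a left half-plane together with $2\pi i$-periodic arcs to $+\infty$~--- and defines the closed set
\[ X = \bigcap_{k\geq 0}\Bigl( f_a^{-k}\bigl(\C\setminus D(0,M^k(R,f_a))\bigr)\cup\bigcup_{j=0}^{k-1} f_a^{-j}(\mathcal{M})\Bigr)\subset A_R(f_a)\cup F(f_a). \]
The complement of $\mathcal{M}$ consists of horizontal strips of bounded height; hence if an orbit stays out of $\mathcal{M}$, its imaginary parts and negative real parts are controlled, and an elementary growth estimate (Corollary~\ref{cor:growth}) then forces the real part to grow like iterated exponentials. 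This shows that the component of $\C\setminus X$ containing any given $z_0$ is bounded, which is exactly (ii)~--- in fact for the smaller set $A_R(f_a)\cup F(f_a)\subset I(f_a)\cup F(f_a)$, yielding the stronger Theorem~\ref{thm:main} as well. Your intuition that the loops must splice Fatou pieces with crossings through the escaping hairs is accurate, but the quantitative mechanism (bounded imaginary drift along the complementary strips forces fast escape) is what makes it a proof, and that is what your proposal lacks.
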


 The maps $f_a$ as in~\eqref{eqn:exponentialmaps}, for general  $a\in\C$, have also been investigated
    in considerable detail; compare e.g.\
    \cite{baker-rippon,devaney-krych84,schleicher-zimmer03,Rem07,rempe-schleicher09}. This family can be
    considered as an analogue of the family of quadratic polynomials, which gives rise to the famous Mandelbrot set
    (see \cite{BDHRGH} and \cite{rempe-schleicher08}). 

  The dynamical structure of $f_a$ for general $a$ is much more complicated than for real $a<-1$, but
    nonetheless Theorem~\ref{thm:maindisjointtype} can be extended to a large and conjecturally dense set of parameters. 
     More precisely, it is known \cite{Rem07} that the \emph{Julia set} $J(f_a)$ (informally, the locus of
    chaotic dynamics; see Section~\ref{sec:preliminaries}) can be written as a union of hairs and endpoints if and only if $a\notin J(f_a)$.
    In this case, $a$ belongs to the \emph{Fatou set} 
     $F(f_a)=\C\setminus J(f_a)$, and $F(f_a)$ is precisely the basin of attraction of an attracting or parabolic periodic cycle (see Proposition~\ref{prop:attractingparabolic}). 
    Note that $F(f_a)$  is no longer
     connected in general; compare Figure~\ref{subfig:per3}. In particular, $J(f_a)$ is not a Cantor bouquet in this 
   case, but rather a more complicated structure where different curves share the same endpoint, 
    known as a \emph{pinched Cantor bouquet}. (For further details, see the discussion in  
   Section~\ref{sec:preliminaries}.)

  All of the above  discussion for the case $a<-1$ carries over to the case where $a \in F(f_a)$, with the exception that the radial Julia set is a proper subset of the set of non-escaping points when $F(f_a)$ has a parabolic cycle. By~\cite{alhabib-rempe15}, 
    the set $\tilde{E}(f_a)$ has $\infty$ as an explosion point; in contrast, we again show that $(J(f_a)\setminus I(f_a) ) \cup \{\infty\}$ is totally separated also in this case.

   A second strengthening of Theorem~\ref{thm:maindisjointtype} 
    arises by considering not only the division of the set $E(f_a)$  into escaping and non-escaping endpoints, but also by distinguishing escaping points
    by the \emph{speed} of their escape.  The \emph{fast} escaping set $A(f)$ of a transcendental entire function $f$, introduced by 
     Bergweiler and Hinkkanen \cite{bergweiler-hinkkanen99} and 
    investigated closely by Rippon and Stallard (see \cite{rippon-stallard-05,rippon-stallard12}), has played an important role in recent progress in transcendental dynamics. 
   Informally, $A(f)$ consists of those points
    of $I(f)$ that tend to infinity at the fastest rate possible; for a formal definition, see~\eqref{eqn:fastescaping} below. Let us say that a point $z\in J(f)$ is \emph{meandering} if
    it does not belong to $A(f)$, and denote the set of meandering points by $\meandering(f)$. It is known that for $f_a$, $a \in \C$, every point on a hair belongs to $A(f_a)$ (see \cite[Lemma~5.1]{schleicher-zimmer03} and compare also 
    \cite{alhabib-rempe15}); that is, when $a\in F(f_a)$ 
    every meandering
    point is an endpoint. 

  By \cite[Remark on p.\ 68]{alhabib-rempe15}, for $a$ as above, infinity is an explosion point even for 
    the set $E(f_a)\cap A(f_a)$ of fast escaping endpoints. In contrast, we show the following.

 \begin{theorem}[Meandering endpoints do not explode] \label{thm:main}
    Suppose that $a\in\C$ is such that $a\in F(f_a)$. Then the set $\meandering(f_a)\cup\{\infty\}$ is totally separated. 
 \end{theorem}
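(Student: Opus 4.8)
The plan is to reduce the statement, via the known total separation of the endpoint set, to a purely topological fact about spiders' webs, and then to rule out the relevant spider's web using the existence of an unbounded Fatou component. First, recall that every point on a hair of $J(f_a)$ belongs to $A(f_a)$; hence $\meandering(f_a)=J(f_a)\setminus A(f_a)$ consists entirely of (non--fast--escaping) endpoints, i.e.\ $\meandering(f_a)\subseteq E(f_a)$. Since $E(f_a)$ is totally separated (for $a\in(-\infty,-1)$ this is Mayer's theorem; in general it follows from the structure of $J(f_a)$ as a pinched Cantor bouquet), so is its subspace $\meandering(f_a)$. Therefore, if $X\defeq\meandering(f_a)\cup\{\infty\}$ were \emph{not} totally separated, the only pair of points of $X$ that could fail to be separated would be some $z_0\in\meandering(f_a)$ and $\infty$. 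So everything comes down to showing that each meandering endpoint $z_0$ can be separated from $\infty$ by a set that is open and closed in $X$; equivalently, that $z_0$ has a \emph{bounded} relatively clopen neighbourhood in $X$.

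This is where spiders' webs come in. I would prove a topological characterisation of spiders' webs which, specialised to the present situation, yields: if $z_0\in\meandering(f_a)$ \emph{cannot} be separated from $\infty$ inside $X$, then the closure $\overline X$ (taken in $\CR$) contains a spider's web, i.e.\ a connected set $S\subseteq\overline X$ together with bounded simply connected domains $G_n$ satisfying $G_n\subseteq G_{n+1}$, $\bigcup_n G_n=\C$ and $\partial G_n\subseteq S$. The underlying intuition is that a point which cannot be separated from $\infty$ must be ``surrounded at every scale'' by $X$, and the characterisation says that such surrounding barriers can always be assembled into a spider's web inside $\overline X$. Stating and proving this equivalence carefully — robustly enough that it applies even though $X$ itself is neither compact nor closed, so that quasi-components, rather than connected components, are the correct notion — is the conceptual core of the argument, and is the part we expect to be of independent interest.

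It then remains to show that $\overline X$ contains no spider's web. Since $\meandering(f_a)\subseteq J(f_a)$ and $J(f_a)\cup\{\infty\}$ is closed in $\CR$, we have $\overline X\subseteq J(f_a)\cup\{\infty\}$. Let $U_0$ be the Fatou component containing a half-plane $\{\Re z<-R\}$ with $R$ large; such a half-plane is mapped by $f_a$ into a small disc about $a$ and hence lies in the basin of the attracting or parabolic cycle, so $U_0\subseteq F(f_a)$ is connected and unbounded. If $\overline X$ contained a spider's web with domains $G_n$, then each $\partial G_n$ would be a bounded subset of $\overline X\subseteq J(f_a)\cup\{\infty\}$, hence $\partial G_n\subseteq J(f_a)$ and so $\partial G_n\cap U_0=\emptyset$; but for all large $n$ we have $G_n\cap U_0\neq\emptyset$ (as $\bigcup_n G_n=\C$) and $U_0\not\subseteq G_n$ (as $G_n$ is bounded and $U_0$ is not), and then $U_0=(U_0\cap G_n)\cup(U_0\setminus\overline{G_n})$ would be a disconnection of the connected set $U_0$ — a contradiction. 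Hence $X$ is totally separated, proving Theorem~\ref{thm:main}; the case $a\in(-\infty,-1)$ of Theorem~\ref{thm:maindisjointtype} follows since $\radial(f_a)\subseteq\meandering(f_a)$.

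The main obstacle is the topological step: producing a characterisation of spiders' webs strong enough to turn ``$z_0$ is not separated from $\infty$'' into ``$\overline X$ contains a spider's web''. The difficulty is that, $X$ being non-compact and non-closed, one cannot pass to connected components directly; instead one must work with the lattice of bounded clopen subsets of $X$, show that their failure to shrink down to $z_0$ produces, at arbitrarily large scale, a closed ``annular'' barrier in $\overline X$ encircling $z_0$, and then splice these barriers together into a genuine nested exhaustion of $\C$ by simply connected domains with boundaries in $\overline X$ while keeping the union of those boundaries connected. By contrast, the remaining inputs — total separation of $E(f_a)$, and the existence of an unbounded Fatou component — are either already in the literature or elementary.
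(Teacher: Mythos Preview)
Your reduction to separating each $z_0\in\meandering(f_a)$ from $\infty$ is correct, and the observation that $\meandering(f_a)\subset E(f_a)$ is totally separated is the right starting point; this matches the paper's Corollary~\ref{cor:totsep}. But the topological characterisation you want to prove is false, and the argument collapses at exactly this point.

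You propose to show: if $z_0\in X$ cannot be separated from $\infty$ in $X$, then $\overline X$ contains a spider's web. Now apply this (purported) lemma with $X=\tilde E(f_a)\cup\{\infty\}$, the set of \emph{escaping} endpoints together with $\infty$, for $a\in(-\infty,-1)$. By the result of Alhabib and the second author that you cite, $\infty$ is an explosion point for this set: no $z_0\in\tilde E(f_a)$ is separated from $\infty$ in $X$. Yet $\overline X\subset J(f_a)\cup\{\infty\}$, and by precisely your step~4 (the unbounded Fatou component $U_0$), no subset of $J(f_a)$ can be a spider's web. So your lemma would force every escaping endpoint to be separated from $\infty$, contradicting what is known. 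The upshot is that ``$z_0$ not separated from $\infty$ in $X$'' simply does not imply that $\overline X$ surrounds $z_0$ at every scale; sets like $\tilde E(f_a)$ are totally separated in the plane and have closure contained in a nowhere-dense set with unbounded complementary components, and still fail to separate their points from $\infty$. No amount of care with quasicomponents versus components will repair this, because the counterexample is exactly of the form you are trying to cover.

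What the paper does instead is work on the \emph{complement}: it shows directly, by a dynamical argument (the set $\mathcal M=f_a^{-1}(D\cup\sigma)$ together with the growth estimates of Lemma~\ref{lem:exponentialgrowth} and Corollary~\ref{cor:growth}), that $A_R(f_a)\cup F(f_a)$ separates every finite point from $\infty$ (Theorem~\ref{thm:fastseparation}), hence is a spider's web. The topological characterisation (Theorem~\ref{thm:spiderswebsprecise}) is used in this direction, not in the direction you attempt. The point is that distinguishing $\meandering(f_a)$ from $\tilde E(f_a)$ genuinely requires dynamics: both have the same closure $J(f_a)$ and both are totally separated, so purely topological information about $X$ and $\overline X$ cannot tell them apart. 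You must use that the complement of $\meandering(f_a)$ contains, beyond $F(f_a)$, the fast-escaping set $A(f_a)$, and then show that $A(f_a)\cup F(f_a)$ has the spider's-web structure.
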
   

\subsection*{Spiders' webs}

 Our results have a connection with a topological structure introduced by Rippon and Stallard in \cite{rippon-stallard12},
   known as a `spider's web'.

\begin{dfn}[Spider's web]\label{defn:spidersweb}
A set $E \subset \mathbb{C}$ is an \emph{(infinite) spider's web} if $E$ is connected and there exists a sequence $(G_n)_{n=0}^{\infty}$ of bounded simply connected domains, with $G_n \subset~ G_{n+1}$ 
    and  $\partial G_n \subset E$ for $n\geq 0$, such that $\bigcup_{n=0}^{\infty} G_n= \mathbb{C}.$ 
\end{dfn}

We shall prove that the complement $A(f_a) \cup F(f_a)$ of $\meandering(f_a)$ is a spider's web when $a \in F(f_a)$.
  Together with the fact that  $\meandering(f_a)\subset E(f_a)$ itself is totally separated (in $\C$), this easily
  implies Theorem~\ref{thm:main}. Our proof uses a new topological characterisation of spiders' webs, which
  may have independent interest.

\begin{theorem}[Characterisation of spiders' webs]\label{thm:spiderswebs}
  Let $E\subset\C$ be connected. Then  $E$ is a spider's web if and only if  $E$ separates every  finite point
    $z\in\C$ from  $\infty$. 

   (Here $E$ separates $z$ from $\infty$ if the two points are separated in
      $(\C\setminus E)\cup \{z,\infty\}$.) 
\end{theorem}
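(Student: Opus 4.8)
The plan is to prove both implications of Theorem~\ref{thm:spiderswebs} directly from the definitions, using connectedness of $E$ throughout. For the \emph{easy direction}, suppose $E$ is a spider's web with exhausting domains $(G_n)$. Given a finite point $z\in\C$, choose $n$ with $z\in G_n$. Since $E$ is connected and contains $\partial G_n$, and since $\partial G_n$ is compact, the set $U \defeq (\C\setminus E)\cap G_n$, together with $z$, is an open subset of $(\C\setminus E)\cup\{z,\infty\}$; its complement in that space contains $\infty$ and is also open because $\overline{G_n}$ is compact and disjoint from a neighbourhood of $\infty$. (One must be slightly careful: $z\in E$ is impossible since $z\in G_n$ and $\partial G_n\subset E$ would otherwise force $G_n$ to meet $E$, but $G_n$ open with $E\ni z$ interior to $G_n$ is fine — actually the correct statement is simply that $z$ need not lie in $E$, and if it does the separation assertion is about $(\C\setminus E)\cup\{z,\infty\}$ where $z$ is isolated-ish; I would phrase the conclusion as: $G_n\cap\bigl((\C\setminus E)\cup\{z\}\bigr)$ is clopen in $(\C\setminus E)\cup\{z,\infty\}$ and separates $z$ from $\infty$.) The key point is that $\partial G_n\subset E$ means the topological frontier of $G_n$ is removed, so $G_n$ contributes a clopen piece.

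For the \emph{hard direction}, assume $E$ is connected and separates every finite $z$ from $\infty$. I need to manufacture the domains $G_n$. Fix an exhaustion of $\C$ by disks $D_n=\{|w|<n\}$. For each $n$, pick a point $z_n$ with $|z_n|=n$ whose orbit-free choice is irrelevant; more robustly, for each finite $z$ the separation hypothesis yields a clopen subset $V_z$ of $(\C\setminus E)\cup\{z,\infty\}$ with $z\in V_z$, $\infty\notin V_z$. The set $V_z\setminus\{z\}$ is then open in $\C\setminus E$, and I claim $W_z\defeq V_z\setminus\{z\}$, viewed inside $\C\setminus E$, is also \emph{closed} in $\C\setminus E$ except possibly at the single point $z$: indeed $V_z$ clopen in $(\C\setminus E)\cup\{z,\infty\}$ forces $W_z$ to be relatively clopen in $(\C\setminus E)\cup\{z\}$, hence in particular $\overline{W_z}\cap(\C\setminus E)\subseteq W_z$. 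Now let $G$ be the connected component of $W_z\cup\{z\}$ in $\C$ containing $z$ — wait, $W_z\cup\{z\}$ need not be open at $z$. The cleaner route: take $\Omega_z$ to be the connected component of $\interior(\overline{W_z\cup\{z\}})$ containing $z$, or more simply observe $W_z$ is a union of components of $\C\setminus E$ whose closures avoid $\infty$; each such component is an open connected subset of $\C$, bounded, and adding $z$ if $z$ lies in its boundary keeps it bounded. I would define $\Omega_z$ as the union of $\{z\}$ with those components of $\C\setminus E$ contained in $V_z$; this is bounded, and its boundary lies in $E\cup\{z\}$, hence in $E$ once we check $z$ is interior — which holds because $V_z$ is a neighbourhood of $z$ in $(\C\setminus E)\cup\{z,\infty\}$, so a punctured disk around $z$ lies in $\C\setminus E$ and meets only components inside $V_z$.

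The \emph{main obstacle} is upgrading these sets $\Omega_z$ to a nested sequence of \emph{simply connected} bounded domains exhausting $\C$. Simple connectivity is the delicate point: $\Omega_z$ as constructed may be multiply connected or disconnected. The standard fix, which I would employ, is the \emph{filled} version: set $\widehat{\Omega}_z\defeq\C\setminus U_\infty$, where $U_\infty$ is the connected component of $\C\setminus\overline{\Omega_z}$ (or of $(\C\setminus E)\cup\{\infty\}$ minus $V_z$) that contains $\infty$; filling in the bounded complementary components makes $\widehat{\Omega}_z$ simply connected by the standard plane-topology criterion (its complement in $\CR$ is connected), it remains bounded, and crucially $\partial\widehat{\Omega}_z\subseteq\partial\Omega_z\subseteq E$. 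To exhaust $\C$, for each $n$ choose $z$ with $|z|\ge n$ (the hypothesis applies to \emph{every} finite point), but the resulting $\widehat{\Omega}_z$ need not contain $D_n$; so instead I would take a finite cover: for each $w$ with $|w|\le n$, separation gives $\widehat{\Omega}_w\ni w$ open, these cover the compact set $\overline{D_n}$, extract a finite subcover $\widehat{\Omega}_{w_1},\dots,\widehat{\Omega}_{w_k}$, and let $G_n$ be the filled union of $D_n\cup\bigcup_j\widehat{\Omega}_{w_j}$ — filled again to restore simple connectivity. One checks $\partial G_n\subset E$ because the boundary of a union of sets with boundaries in the connected set $E$, after filling, still lies in $E$ (here connectedness of $E$ is what prevents the filling from swallowing parts of $E$: a bounded complementary component of the union cannot contain points of $E$ without disconnecting $E$, since $E$ meets $\infty$'s component via... — this needs the hypothesis that $E$ separates points from $\infty$, applied to any putative $E$-point trapped inside). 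Finally pass to a nested subsequence by replacing $G_n$ with $G_1\cup\cdots\cup G_n$ (filled), and $\bigcup G_n=\C$ since $G_n\supseteq D_n$. Connectedness of $E$ was assumed; the spider's-web definition also demands $E$ connected, so nothing further is needed there. I expect the bookkeeping around "filling preserves $\partial\subset E$" to be where all the real work sits, and it is exactly there that the separation hypothesis (not just connectedness of $E$) gets used.
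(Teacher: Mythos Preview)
Your overall strategy matches the paper's: from the separation hypothesis, produce for each point a bounded open set with boundary in $E$; cover a compact disc by finitely many such sets; take the union and fill to obtain the simply connected domains $G_n$. The paper organises this as $\text{(c)}\Rightarrow\text{(b)}\Rightarrow\text{(a)}$ in its Theorem~\ref{thm:spiderswebsprecise}, but the content is the same.

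There is, however, a genuine gap in your construction of $\Omega_z$. You define $\Omega_z$ as $\{z\}$ together with the components of $\C\setminus E$ contained in $V_z$, and then assert that $z$ is interior to $\Omega_z$ because ``a punctured disk around $z$ lies in $\C\setminus E$''. This is false whenever $z\in E$ and $z$ is not isolated in $E$. For a concrete example, let $E$ be the union of the circles $\lvert w\rvert=n$ ($n\ge 1$) with the ray $[0,\infty)$, and take $z=\tfrac12$. Then $E$ is a connected spider's web, the set $V_z=(D(0,1)\setminus E)\cup\{\tfrac12\}$ is clopen in $(\C\setminus E)\cup\{\tfrac12,\infty\}$, but your $\Omega_z$ equals $D(0,1)\setminus\bigl([0,\tfrac12)\cup(\tfrac12,1)\bigr)$, which is \emph{not} open at $\tfrac12$. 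The fix is exactly what the paper does: rather than assembling $\Omega_z$ from components of $\C\setminus E$, use that $V_z$ is clopen in $A=(\C\setminus E)\cup\{z,\infty\}$ to choose (by normality of $\Ch$) an open set $U\subset\Ch$ with $U\cap A=V_z$ and $\partial U\cap A=\emptyset$; then $U$ is automatically open in $\C$, bounded, contains $z$, and has $\partial U\subset E$. In the example above one simply takes $U=D(0,1)$.

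Two smaller remarks. First, your worry that ``filling preserves $\partial\subset E$'' requires connectedness of $E$ is misplaced: for any bounded open $V\subset\C$, the fill $\widehat V$ satisfies $\partial\widehat V\subset\partial V$, since $\partial\widehat V$ is the boundary of the unbounded component of $\C\setminus V$. It is irrelevant whether the fill swallows points of $E$; only $\partial G_n\subset E$ is required. Second, the paper does not use connectedness of $E$ at all for the equivalence between the existence of the domains $G_n$ and the separation property; connectedness enters only in showing that $E$ (rather than merely $E\cup\{\infty\}$) is connected, which in your setting is a hypothesis.
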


\subsection*{Singular values in the Julia set}

  As mentioned above, when $a\in J(f_a)$, it is no longer possible to write $J(f_a)$ as a union of hairs and 
   endpoints, so questions concerning the structure of the set of endpoints are less natural in this setting.
   (The set of \emph{escaping endpoints}, on the other hand, does remain a natural object; compare the discussion 
    in~\cite{alhabib-rempe15}.) 

   However, the radial Julia set, the set of non-escaping points and the set of meandering points
   remain of interest, and it turns out that their structure usually differs dramatically from the case where $a\in F(f_a)$. 
   Indeed, consider the \emph{postsingularly finite} case, where the omitted value $a$ eventually
   maps onto a repelling periodic cycle. Then $J(f_a)=~\C$, and $\radial(f_a) = \C\setminus I(f_a)$ (see Corollary \ref{cor:radialJ}(c)). It follows from results of~\cite{Rem07} that
   $\radial(f_a)$ contains
   a dense collection of unbounded connected sets.  In particular,  $\radial(f_a)\cup\{\infty\}$ is connected, and 
   $I(f_a)$ is not a spider's web. (See Proposition~\ref{prop:hairs} and Corollary~\ref{cor:largemeandering}.) It is plausible that $\C\setminus~I(f_a)~\cup \{\infty\}$ is connected for all $a\in\C$, 
   and that, in particular,  $I(f_a)$ and $A(f_a)$ are never spiders' webs. 

  The techniques from our proof of Theorem~\ref{thm:main} can nonetheless be adapted to 
    yield a slightly technical result about the set of meandering points whose orbits stay away from the
    singular value (Theorem~\ref{thm:avoidsingularvalue}). In particular, we recover a result from~\cite{rempe-04}, concerning
    a question of Herman, Baker and Rippon about 
    the boundedness of certain Siegel discs in the exponential family (Theorem~\ref{thm:siegel}). 

  We shall leave open the question whether, when $a\in J(f_a)$, infinity can be an explosion point for the set 
     $\meandering(f_a) \cap I(f_a)$ of points that are meandering and escaping. 

\subsection*{Fatou's function}

Our proofs build on an idea from recent work of the first author \cite{Evdoridou16} concerning Fatou's function 
   \[ f\colon \C\to \C; \qquad z\mapsto z+1+e^{-z}. \]
  For this function, the Julia set is once again a Cantor bouquet, but in contrast to the exponential family
   the Fatou set here is contained in the escaping set  $I(f)$. It is shown in \cite{Evdoridou16} that $I(f)$ is
   a spider's web, and that the set of non-escaping endpoints of $f$ together with infinity is 
    \emph{totally disconnected}. We shall show that Theorem~\ref{thm:maindisjointtype}
    implies the stronger result that $A(f)\cup F(f)$ is a spider's web for Fatou's function, and that
    the set of meandering endpoints $\meandering(f)\subset J(f)$ together with infinity is a totally separated set. 
    In particular, this illustrates that our
    results have consequences beyond the exponential family itself.

\subsection*{Structure of the article} In Section~\ref{sec:preliminaries}, we review key definitions and facts from exponential dynamics. We also
    establish Theorem~\ref{thm:spiderswebs}, concerning spiders' webs. Our results about exponential maps $f_a$ with $a\in F(f_a)$ are proved in
    Section~\ref{section_exp}, while the case where $a\in J(f_a)$ is discussed in Section~\ref{sec:julia}. Finally, we discuss Fatou's function in Section~\ref{section_fatou}. 

\subsection*{Acknowledgements} 
 We thank Phil Rippon, Dave Sixsmith and Gwyneth Stallard for interesting discussions and comments on this work. We are also grateful to Dave Sixsmith for detailed
    comments and feedback that helped to improve the presentation of the paper.  

\section{Preliminaries} \label{sec:preliminaries}

 \subsection*{Notation and background}
   We denote the complex plane by $\mathbb{C}$ and the Riemann sphere by $\Ch= \mathbb{C} \cup \{\infty\}$. The round disc of radius $r$ around a point $z_0$ is denoted by
     $D(z_0,r)$. 

 Let $f$ be a transcendental entire function. As noted in the introduction, the \emph{Julia set} and \emph{Fatou set}
    of $f$ are denoted $J(f)$ and  $F(f)$, respectively. Here $F(f)$ is the set of normality of the family of 
    iterates $(f^n)_{n=1}^{\infty}$ of $f$, and  $J(f)=\C\setminus  F(f)$ is its complement. For further background
    on transcendental dynamics, we refer to \cite{Berg}. 

The \emph{radial Julia set} $\radial(f)\subset J(f)$ can be described as the locus of non-uniform hyperbolicity: at a point $z\in \radial(f)$, it is possible to pass from arbitrarily small scales to a definite scale using univalent iterates. 
   More formally, there is a number $\delta>0$ and infinitely many $n$ such that the spherical disc
     of radius $\delta$ around $f^n(z)$ can be pulled back univalently along the orbit. The radial Julia set
     was introduced by Urba{\'n}ski \cite{urbanskiconical} and McMullen~\cite{mcmullenradial} 
     for rational functions. As far as we are aware, its first appearance in the entire setting, in the special case of 
     hyperbolic exponential maps, is due to Urba\'nski and Zdunik \cite{urbanski-zdunik03}. We
   refer to \cite{Rem09} for a general discussion. In the cases of interest to us, the following properties are sufficient to
   characterise the radial Julia set.
   \begin{enumerate}[(1)]
     \item $\radial(f)\subset J(f)\setminus I(f)$.\label{item:radialescaping}
     \item $\radial(f)$ is forward-invariant: $f(\radial(f))\subset \radial(f)$. Furthermore, $\radial(f)$ is almost
         backwards-invariant except at critical values: 
         if $z\in J(f)$ and $f(z)\in \radial(f)$, then either $z$ is a critical point  of $f$ or
         $z\in \radial(f)$.\label{item:radialinvariance}
     \item Every repelling periodic point and no parabolic periodic point belongs to $\radial(f)$.\label{item:radialperiodic}
     \item Suppose that the forward orbit of $z\in J(f)$ 
       has a finite accumulation point that is not in the closure of the union of the forward orbits of
       critical and asymptotic values. Then $z\in \radial(f)$.\label{item:radialpostsingular}
   \end{enumerate}

The \emph{fast escaping set} $A(f)$ plays a key role in our arguments. 
    We shall use the definition given by Rippon and Stallard in \cite{rippon-stallard12},
   which is slightly different, but equivalent, to the original formulation from \cite{bergweiler-hinkkanen99}. 
    Let $f$ be a transcendental entire function.
   For $r>0$, define the maximum modulus 
\[
    M(r,f)= \max_{\lvert z\rvert =r} \lvert f(z)\rvert.
\]
  We also denote the $n$-th iterate of the function $r\mapsto M(r,f)$ by $M^n(\cdot , f)$. 

  Since $f$ is non-linear, we have
     \begin{equation}\label{eqn:R0}
        R(f)\defeq \inf\{R\geq 0\colon M(r,f)>r \text{ for all } r\geq R\} < \infty.
     \end{equation}
     For $R> R(f)$,
   define 
\begin{equation}\label{eqn:fastescaping}
   A(f) \defeq \{z\in\C\colon \;\text{there exists}\;\ell \in \mathbb{N}\;\text{such that}\; \lvert f^{n+\ell}(z) \rvert \geq M^n(R,f),\;\text{for}\;n \in \mathbb{N}\}. \end{equation}
  It can be shown that the definition is independent of $R$. Again following \cite{rippon-stallard12}, we also define
 \[ A_R(f)\defeq \{z\colon\lvert f^n(z) \rvert \geq M^n(R,f),\;\text{for}\;n \in \mathbb{N}\}. \] 

 \subsection*{Attracting and parabolic exponential maps}
  Let us now turn to background results concerning exponential maps $f_a$ as in~\eqref{eqn:exponentialmaps}.
  Recall that our main result concerns the case where $a \in F(f_a)$. 
 
\begin{proposition}[Attracting and parabolic exponential maps] \label{prop:attractingparabolic}
  Let $a\in\C$ and set $f_a(z)\defeq e^z+a$. Then $a\in F(f_a)$ if and only if $f_a$ has an attracting or parabolic
   cycle.

   In  this case, $F(f_a)$ is precisely the basin of attraction of this cycle, 
     and $\infty$ is accessible from the connected component $U_1$ of $F(f_a)$ containing $a$, 
     by an arc along which the real part tends to infinity.
\end{proposition}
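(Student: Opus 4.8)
The plan is to prove the equivalence first and the accessibility statement afterwards.

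For the equivalence I would start from the observation that the only finite singular value of $f_a$ is the omitted asymptotic value $a$: there are no critical points, since $f_a'=e^z$ never vanishes, and there is no other asymptotic value, since $e^z$ cannot tend to a finite non-zero limit along a curve to $\infty$. If $f_a$ has an attracting or parabolic cycle, then, since the immediate basin of such a cycle necessarily contains a singular value (see e.g.\ \cite{Berg}), this basin contains $a$, and hence $a\in F(f_a)$. For the converse, suppose $a\in F(f_a)$ and let $U_1$ be the component of $F(f_a)$ containing $a$. Since $f_a$ has finitely many singular values it has no wandering domains, so $U_1$ is eventually mapped into a periodic Fatou component $U$ of some period $p$, and $U$ is an attracting basin, a parabolic basin, a Siegel disc or a Baker domain (entire functions have no Herman rings). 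A Baker domain would lie in $I(f_a)\cap F(f_a)=\emptyset$, the intersection being empty because $f_a$ belongs to the Eremenko--Lyubich class $\mathcal B$ and so $I(f_a)\subseteq J(f_a)$. And $U$ cannot be a Siegel disc: if it lay in a cycle of Siegel discs $S_0,\dots,S_{p-1}$, then from some time $N$ onwards the orbit of $a$ would move inside these discs, in linearising coordinates, by an irrational rotation, so its accumulation set would consist of $p$ Jordan curves compactly contained in the $S_j$; together with the finitely many orbit points $f_a^n(a)$ with $n<N$ (which lie in $F(f_a)$ by forward invariance), this would force the postsingular set $\overline{\{f_a^n(a):n\ge 0\}}$ into $F(f_a)$, contradicting both the classical inclusion $\partial S_0\subseteq\overline{\{f_a^n(a):n\ge 0\}}$ and $\partial S_0\subseteq J(f_a)$. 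Hence $U$ is an attracting or parabolic basin. Finally, since $f_a$ has a single singular value it has at most one non-repelling cycle \cite{baker-rippon}, so together with the absence of wandering and Baker domains, $F(f_a)$ is exactly the basin of this cycle, and by the first part $a$ — hence $U_1$ — lies in its immediate basin.

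For the accessibility statement the key structural fact is that $f_a\colon\C\to\C\setminus\{a\}$ is a universal covering whose deck group is generated by $z\mapsto z+2\pi i$. Since $U_1$ is simply connected (as $f_a\in\mathcal B$ has no multiply connected Fatou components) and a small loop around $a$ generates the fundamental group of both $U_1\setminus\{a\}$ and $\C\setminus\{a\}$, the preimage $f_a^{-1}(U_1)$ is connected, hence a single Fatou component, necessarily the predecessor $U_p$ of $U_1$ in the cycle $U_1\to\dots\to U_p\to U_1$. Picking $\e>0$ with $D(a,\e)\subseteq U_1$ and setting $M=\log\e$, which we may take $<\Re a$, we obtain $\{\Re z<M\}=f_a^{-1}(D(a,\e))\subseteq U_p$, so $f_a^{-1}(\{\Re z<M\})=\{\Re(e^z)<M-\Re a\}$ lies in $f_a^{-1}(U_p)$; this last set contains the ray $\{x+i\pi\colon x>\log(\Re a-M)\}$, along which $\Re z\to+\infty$, and since the components of $f_a^{-1}(U_p)$ are exactly the $2\pi i$-translates of $U_{p-1}$, a suitable translate of this ray lies in $U_{p-1}$. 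Applying appropriate inverse branches of $f_a$ to transport such a curve backwards along the cycle into $U_1$ — each step being a homeomorphism $U_{j-1}\to U_j$ onto a simply connected domain which, for indices $\ne 1$, avoids $a$, and using that an inverse branch of $f_a$ carries a curve tending to $\infty$ in a fixed direction to one along which $\Re z\to+\infty$ — yields such a curve inside $U_1$. (When $p\le 2$ the curve obtained just above already lies in $U_1$.) Joining it to $a$ inside the connected open set $U_1$, and trimming, gives the required arc.

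I would expect the technical heart to be twofold. One part is the exclusion of Siegel discs, which rests on correctly identifying the accumulation set of the orbit of $a$ and on the classical description of the boundary of a Siegel disc. The other is the bookkeeping in the accessibility argument — tracking which $2\pi i$-translate of each cycle component contains the pulled-back curve, and handling the exceptional small periods $p=1$ and $p=2$, where $U_p$ or $U_{p-1}$ coincides with $U_1$. The remaining ingredients (absence of wandering domains, $I(f)\subseteq J(f)$ for the class $\mathcal B$, immediate basins containing singular values, the bound on the number of non-repelling cycles, and the inclusion of $\partial S$ in the closure of the postsingular set) are standard and would simply be quoted.
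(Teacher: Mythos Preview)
Your proof is correct and follows essentially the same route as the paper's: the same exclusion of Siegel discs via the postsingular-set argument, and the same accessibility construction via the connectedness of $f_a^{-1}(U_1)$ followed by pullbacks of a curve along the periodic cycle. The differences are purely expository~--- your explicit $\pi_1$ argument for connectedness and the extra pullback step to start from a curve with $\Re z\to+\infty$ rather than one in the left half-plane~--- but the underlying ideas coincide.
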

\begin{proof}
   This is well-known; see Section~9 of \cite{baker-rippon}, particularly 
     Corollary~2, Theorems~9 and~10, and the two paragraphs following Theorem~10. Accessibility is not mentioned
     explicitly, but follows from the argument for
     unboundedness of Fatou components; compare also \cite[Section~2]{bhattacharjee-devaney-2000} or 
      \cite[Section~4.4]{schleicher-zimmer03-periodic}. In the case where
       $f_a$ has an attracting fixed point, 
       accessibility of infinity (and much more) can be found in the 
       work of Devaney and Goldberg~\cite[Section~4]{DevaneyGoldberg1987}. 
      As the proof of Proposition~\ref{prop:attractingparabolic} will be instructive for our later 
        constructions, and since we are not aware of a reference containing the statement 
        in the precise form that we will use it, we shall provide the details for completeness.

  For all $a\in\C$, the Fatou set 
   $F(f_a)$ does not have wandering components \cite[Theorem~12]{Berg}, nor
    components on which the iterates tend to infinity \cite[Theorem~15]{Berg}.
    Thus the only possible components of $F(f_a)$ are immediate 
    attracting or parabolic basins, Siegel discs, and their iterated preimages \cite[Theorem~6]{Berg}.
   Now suppose that $a\in F(f_a)$, so 
    the forward orbit of $a$ either converges to an attracting or periodic 
    orbit, or is eventually contained in an invariant circle in a Siegel disc. In particular, the set of accumulation points
    of this orbit that belong to $J(f_a)$ is either empty or consists of a single parabolic cycle. 

    By \cite[Theorem~7]{Berg}, 
     the boundary of any Siegel disc is contained in the closure of the orbit of the omitted value $a$,
    and the basin of any attracting or parabolic cycle must contain $a$. The former is impossible by the above,
     so we conclude that $F(f_a)$ consists of the basin of a single attracting or parabolic cycle.

  Let $p$ be the period of the attracting or parabolic cycle and $U_1$ the component of $F(f_a)$ containing $a$. Consider the set 
   $U_0\defeq f_a^{-1}(U_1)$; since $U_1$ contains a neighbourhood of $a$, $U_0$ contains a left half-plane $\mathcal{H}$. 
   Since $f_a\colon U_0 \to U_1 \setminus \{a\}$ is a covering map, any connected component of $U_0$ intersects $\mathcal{H}$, and hence $U_0$ is connected. 
   It follows that $U_0$ lies on the periodic cycle of Fatou components of $U_1$, with $f_a^{p-1}(U_0)=U_1$.   
    Set $n\defeq \max(1,p-1)$, and let $\Gamma$ be a
    connected component of $f_a^{-n}(\Gamma_0)$ contained in $U_1$; then $\Gamma$ satisfies the properties claimed in the proposition. See Figure~\ref{subfig:curves}. 
\end{proof}

\begin{figure}
  \begin{subfigure}[b]{.5\linewidth}
     \fbox{%
    \def\svgwidth{.93\textwidth}
     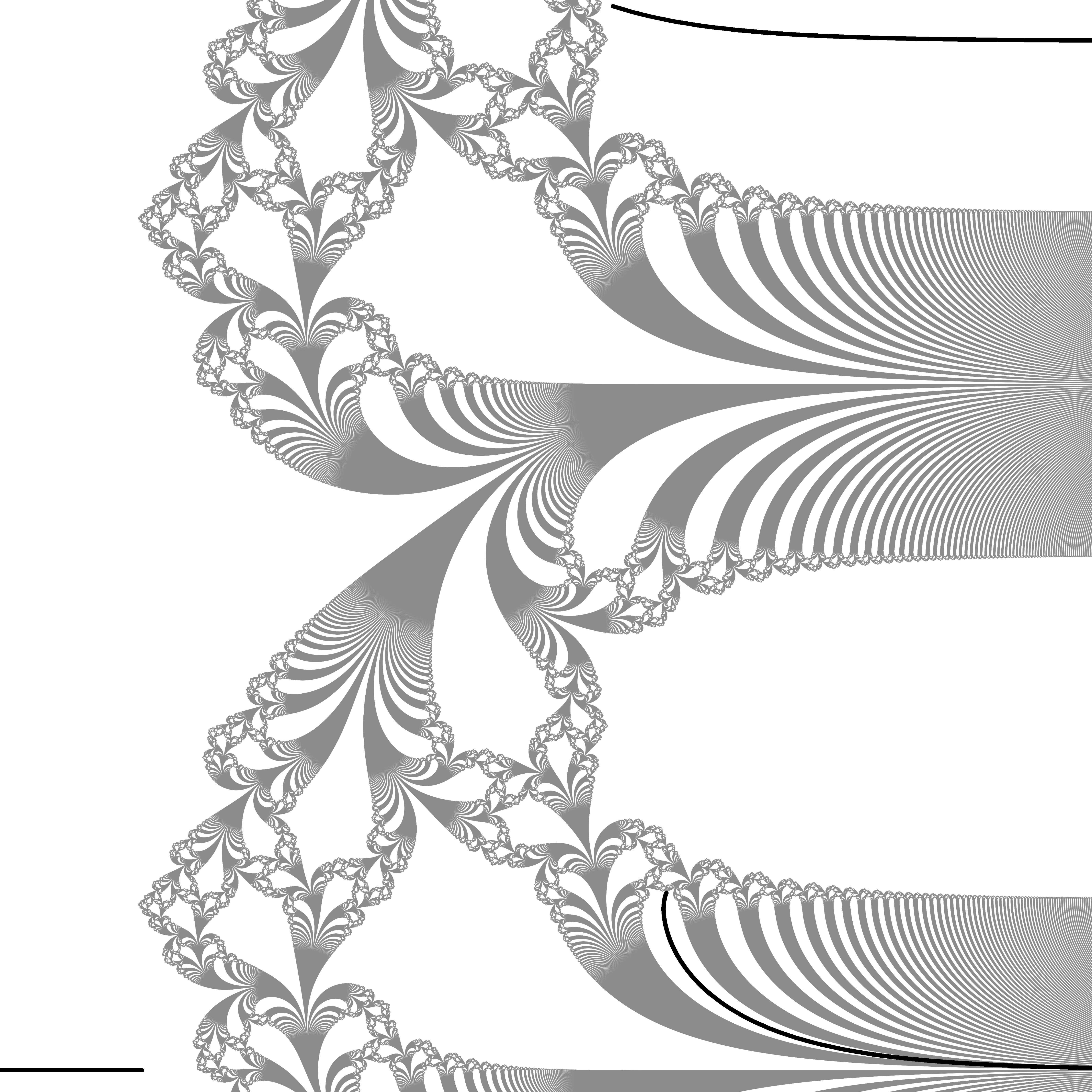}
    \caption{Proposition~\ref{prop:attractingparabolic}}\label{subfig:curves}
  \end{subfigure}%%
    \hfill
  \begin{subfigure}[b]{.5\linewidth}
     \fbox{%
    \def\svgwidth{.93\textwidth}
     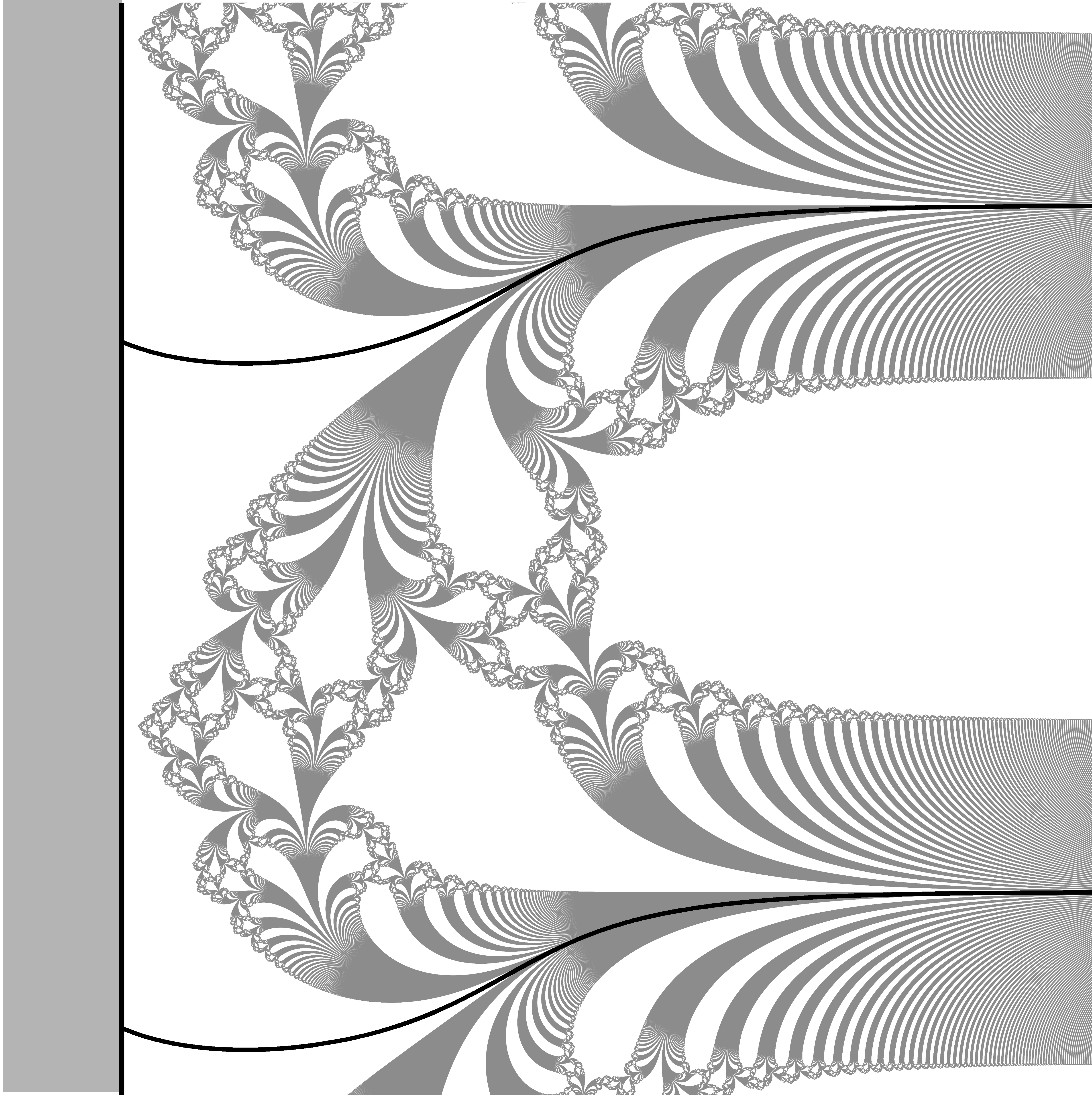}
    \caption{The set $\mathcal{M}$ in the proof of Theorem~\ref{thm:fastseparation}}\label{subfig:M}
  \end{subfigure}
  \caption{Illustration of the proofs of Proposition~\ref{prop:attractingparabolic} and Theorem~\ref{thm:fastseparation},
     using the exponential map $f=f_a$ from  Figure~\ref{subfig:per3}. As shown in~\subref{subfig:curves}, the curve 
     $\Gamma$ is constructed as a pullback of $\Gamma_0$, which is a piece of the negative real axis. 
     The set $\mathcal{M}$ in~\subref{subfig:M} is the union of the left half-plane $\mathcal{H}$ and the set
      $f^{-1}(\sigma)$, which is a union of infinitely many arcs to $\infty$. (Here, $\sigma$ is a subset of the
       curve $\Gamma$ from~\subref{subfig:curves}.)}
\end{figure}

 In particular, we obtain the following description of the radial Julia set in the case where $a\in F(f_a)$ (as well as when
   $f_a$ is postsingularly finite).
 
\begin{cor}[Radial Julia sets]\label{cor:radialJ}
Let $f_a(z) = e^z+a$, $a \in \C$.
\begin{itemize}
\item[\emph{(a)}] If $f_a$ has an attracting periodic point, then $\radial(f_a) = J(f_a)\setminus I(f_a)$;
\item[\emph{(b)}] if $f_a$ has a parabolic periodic point $z_0$, then $\radial(f_a) = J(f_a) \setminus (I(f_a) \cup O^-(z_0))$, where $O^-(z_0)$ denotes the full backward orbit of $z_0$;
\item[\emph{(c)}] if the orbit of $a$ under $f_a$ is finite, then $\radial(f_a) = \C\setminus I(f_a)$.
 \end{itemize}     
 \end{cor}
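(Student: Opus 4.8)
The plan is to deduce all three statements from the characterising properties~\eqref{item:radialescaping}--\eqref{item:radialpostsingular} of the radial Julia set together with Proposition~\ref{prop:attractingparabolic}. In each case one inclusion is immediate: by property~\eqref{item:radialescaping} we always have $\radial(f_a)\subset J(f_a)\setminus I(f_a)$, and in case~(b) the parabolic orbit $O^-(z_0)$ is disjoint from $\radial(f_a)$ by property~\eqref{item:radialperiodic} (no parabolic point is radial) together with the almost-backward-invariance in property~\eqref{item:radialinvariance} (since $f_a$ has no critical points, a point whose image is non-radial and which lies in $J(f_a)$ is itself non-radial); hence $\radial(f_a)\subset J(f_a)\setminus(I(f_a)\cup O^-(z_0))$. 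In case~(c), $J(f_a)=\C$ by postsingular finiteness, so the first inclusion reads $\radial(f_a)\subset\C\setminus I(f_a)$.

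For the reverse inclusions I would use property~\eqref{item:radialpostsingular}: if $z\in J(f_a)$ has a forward orbit accumulating at a finite point outside the closure of the postsingular set, then $z\in\radial(f_a)$. Exponential maps have no critical points and a single asymptotic value, namely the omitted value $a$; so the closure of the union of the forward orbits of critical and asymptotic values is exactly $P\defeq\overline{\{f_a^n(a)\colon n\geq 0\}}$. In case~(a), Proposition~\ref{prop:attractingparabolic} tells us $F(f_a)$ is the basin of an attracting cycle, so $P$ is contained in $F(f_a)$ together with the finitely many points of the attracting cycle, all of which lie in $F(f_a)$; thus $P\subset F(f_a)$ and in particular $P\cap J(f_a)=\emptyset$. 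Now take any $z\in J(f_a)\setminus I(f_a)$. Since $z\notin I(f_a)$, some subsequence $f_a^{n_k}(z)$ stays bounded, hence (after passing to a further subsequence) converges to a finite point $w$; and $w\in J(f_a)$ because $J(f_a)$ is closed and forward invariant, so $w\notin P$. Property~\eqref{item:radialpostsingular} then gives $z\in\radial(f_a)$, completing case~(a).

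Case~(b) is identical except that now $P$ consists of $F(f_a)$-points together with the parabolic cycle itself, so $P\cap J(f_a)$ is precisely the (finite) parabolic cycle. Taking $z\in J(f_a)\setminus(I(f_a)\cup O^-(z_0))$, the same compactness argument produces a finite accumulation point $w\in J(f_a)$ of the forward orbit of $z$; the only way property~\eqref{item:radialpostsingular} can fail to apply is if the forward orbit of $z$ accumulates \emph{only} at points of the parabolic cycle, i.e.\ $f_a^n(z)\to$ the parabolic cycle. But by the classification of dynamics near a parabolic point (the orbit then lies in an attracting petal and is eventually trapped there), this forces $f_a^n(z)$ to converge to a parabolic periodic point, whence $z\in O^-(z_0)$, contrary to assumption. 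So $w$ can be chosen outside $P$ and property~\eqref{item:radialpostsingular} yields $z\in\radial(f_a)$. Case~(c): here $a$ has finite orbit, so $P=\{f_a^n(a)\colon n\geq 0\}$ is a finite set, each point of which is preperiodic; since $a$ eventually maps onto a repelling cycle, $P\subset J(f_a)$ and every point of $P$ is, by property~\eqref{item:radialperiodic} and~\eqref{item:radialinvariance}, actually in $\radial(f_a)$. For $z\in\C\setminus I(f_a)=J(f_a)\setminus I(f_a)$, a bounded subsequence of the orbit accumulates at a finite $w$; if $w\notin P$ we are done by~\eqref{item:radialpostsingular}, and if the orbit accumulates only on $P$ then, $P$ being finite and forward invariant, the orbit of $z$ eventually lands in $P\subset\radial(f_a)$, so $z\in\radial(f_a)$ by the backward-invariance in property~\eqref{item:radialinvariance} (no critical points).

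The main obstacle is the parabolic case: one must rule out the possibility that a non-escaping orbit in $J(f_a)$ accumulates \emph{only} at the parabolic cycle without actually converging to it. This is where the local theory of parabolic fixed points (Leau--Fatou flower, attracting petals) is needed, and care is required to confirm that accumulation at the parabolic cycle genuinely implies convergence to a single parabolic periodic point and hence membership in $O^-(z_0)$; everywhere else the argument is a routine application of the listed properties of $\radial(f_a)$ together with Proposition~\ref{prop:attractingparabolic}.
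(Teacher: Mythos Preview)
Your argument follows the same route as the paper's, deducing each case from properties~\ref{item:radialescaping}--\ref{item:radialpostsingular} together with Proposition~\ref{prop:attractingparabolic}; the paper in fact asserts without further justification the parabolic step you rightly flag as the delicate one. Two small slips to fix: excluding $O^-(z_0)$ from $\radial(f_a)$ in~(b) uses \emph{forward} invariance (if a preimage were radial then so would $z_0$ be), not backward invariance as you label it; and in your Leau--Fatou argument, an orbit eventually trapped in an attracting petal lies in $F(f_a)$, not in $O^-(z_0)$---the correct dichotomy is that a $J(f_a)$-orbit converging to the parabolic cycle cannot enter a petal and must therefore land on the cycle, giving $z\in O^-(z_0)$.
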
    
\begin{proof}
   In each case, this follows from the properties~\ref{item:radialescaping} to~\ref{item:radialpostsingular} 
   of $\radial(f_a)$ stated at the beginning of the section. Recall
    that always $\radial(f)\subset J(f)\setminus I(f)$ by~\ref{item:radialescaping}. For the
    first two cases, recall the proof of Proposition~\ref{prop:attractingparabolic}. 
    If $f_a$ has an attracting periodic cycle, then the orbit of $a$ is compactly contained in $F(f_a)$, and hence
    every non-escaping point belongs to the radial Julia set by~\ref{item:radialpostsingular}. 
    Now suppose that $f_a$ has a parabolic periodic point $z_0$; then $O^-(z_0)\cap \radial(f_a)=\emptyset$
     by~\ref{item:radialinvariance} and~\ref{item:radialperiodic}. On the other hand, 
    the orbit of $a$ is in the Fatou set and accumulates only on the cycle of $z_0$. Any orbit in 
     $J(f_a)\setminus (I(f_a)\cup O^-(z_0))$ must accumulate at some finite point that is not on this cycle,
     and thus belongs to $\radial(f_a)$ by~\ref{item:radialpostsingular}. 

   Finally, if the orbit of $a$ is finite, then (similarly as in the first part of the proof of
   Proposition~\ref{prop:attractingparabolic}) $J(f_a)=\C$ and $a$ eventually maps to a repelling periodic 
   point $z_0$. As in the parabolic case, we have $J(f_a)\setminus  (I(f_a)\cup O^-(z_0)) \subset \radial(f_a)$,
    but here also $O^-(z_0)\subset \radial(f_a)$ by~\ref{item:radialinvariance} and~\ref{item:radialperiodic}. 
    So  $\radial(f_a) = J(f_a)\setminus I(f_a) = \C\setminus I(f_a)$, as claimed.
\end{proof}

\subsection*{Hairs and endpoints}
  The escaping set of any exponential map $f_a$ decomposes into \emph{hairs} (or \emph{dynamic rays}) and
    (escaping) \emph{endpoints} \cite{schleicher-zimmer03}. These concepts can be defined conveniently as follows~\cite[Definition~1.1]{alhabib-rempe15}.

 \begin{dfn}[Hairs and endpoints]
Let $f_a(z)= e^z+a$, $a \in \C.$ We say that a point $z_0 \in \mathbb{C}$ is on a \emph{hair} if there exists an arc $\gamma\colon [-1,1] \to I(f_a)$ such
that $\gamma(0) = z_0.$ A point $z_0 \in \mathbb{C}$ is an \emph{endpoint} if $z_0$ is not on a hair and there is an arc
$\gamma\colon [0,1]\to \mathbb{C}$ such that $\gamma(0) = z_0$ and $\gamma(t) \in I(f_a)$ for all $t > 0.$
   We denote the set of endpoints by  $E(f_a)$. 
\end{dfn} 

 We shall use the following properties of hairs and endpoints. Note that these are required only for background,
    and to be able to phrase
    our main results in the language of endpoints.

\begin{proposition}[Hairs and endpoints]\label{prop:hairs}
  Let $f_a = e^z+a$, $a\in\mathbb{C}$. 
   \begin{enumerate}[(a)]
     \item Every point in $I(f_a)$ is on a hair or an endpoint.
     \item If $z\in I(f_a)$ is on  a hair, then $z\in A(f_a)$. 
      \item If $a\in F(f_a)$, then every point $z\in  J(f_a)$ is on a hair or an endpoint.%
        \label{item:pinchedcantorbouquet}
     \item If $a\in  J(f_a)$, then there is a point $z\in J(f_a)$ that is neither on a hair nor an 
        endpoint.\label{item:nonlanding1}
     \item If $a$ is on a hair or an endpoint, then $J(f_a)\setminus I(f_a)$ contains a 
        dense collection of unbounded connected sets. 
       \label{item:nonlanding2}
    \end{enumerate}
\end{proposition}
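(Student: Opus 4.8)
The plan is to prove the five statements essentially by citing or lightly adapting the literature, since all of them are either established facts about exponential dynamics or straightforward consequences thereof. For part~(a), I would invoke the classification of the escaping set of $f_a$ into dynamic rays and their endpoints; this is the main structural result of Schleicher and Zimmer~\cite{schleicher-zimmer03}, and its reformulation in the ``hairs and endpoints'' language is exactly the setup in~\cite[Section~1]{alhabib-rempe15}. Concretely, every escaping point has a well-defined external address which is exponentially bounded, and the corresponding ray either passes through the point (so the point is on a hair) or the point is the landing point of the ray (so it is an endpoint). Part~(b) is the statement that hairs consist of fast escaping points: the parametrisation of a dynamic ray satisfies a growth estimate comparable to iterated exponentials, so away from the endpoint every point lies in $A(f_a)$; I would cite~\cite[Lemma~5.1]{schleicher-zimmer03} together with the discussion in~\cite{alhabib-rempe15}, as flagged already in the introduction.

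For part~\ref{item:pinchedcantorbouquet}, the key input is the result of Rempe~\cite{Rem07} that when $a\notin J(f_a)$ every point of $J(f_a)$ is either on a ray or is a (possibly non-escaping) landing point of one or more rays; combined with Proposition~\ref{prop:attractingparabolic}, which tells us $a\in F(f_a)$ is equivalent to $a\notin J(f_a)$, this gives the claim. Here one must be slightly careful that a non-escaping point of $J(f_a)$ which is the landing point of a ray still qualifies as an ``endpoint'' in the sense of the definition above: there is an arc $\gamma\colon[0,1]\to\C$ with $\gamma(0)=z_0$ and $\gamma(t)\in I(f_a)$ for $t>0$, namely a terminal piece of the dynamic ray landing at $z_0$, and $z_0$ is not itself on a hair because it is non-escaping. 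Part~\ref{item:nonlanding1} is the converse situation: when $a\in J(f_a)$, not every ray lands, and indeed there exist points of $J(f_a)$ that are neither on a ray nor accessible along a ray; this is again from~\cite{Rem07} (non-landing rays, or rays whose accumulation sets are non-degenerate), and I would quote the relevant statement directly.

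Finally, for part~\ref{item:nonlanding2} I would argue as follows. Suppose $a$ lies on a hair or is an endpoint. If $a$ lies on a hair $\gamma$, then $a\in I(f_a)$, and the tail of $\gamma$ beyond $a$ (together with its preimages) produces unbounded curves in $I(f_a)$; but we need unbounded connected subsets of $J(f_a)\setminus I(f_a)$, so the argument must instead use the pinched-Cantor-bouquet structure. The point is that once $a$ is the landing point of, or lies on, a ray, the dynamics of $f_a$ on $J(f_a)$ is semiconjugate to the model, but the semiconjugacy is not injective precisely because certain rays share endpoints; the fibres of non-injectivity accumulate densely, and by pulling back along inverse branches one obtains a dense collection of unbounded connected sets inside the complement of $I(f_a)$. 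This is the content of the analysis in~\cite{Rem07} (and is reprised in Section~\ref{sec:julia}); I would state it as a consequence of that work rather than reproving it. I expect parts~\ref{item:nonlanding1} and~\ref{item:nonlanding2} to be the only ones requiring any real care, since (a)--\ref{item:pinchedcantorbouquet} are by now textbook facts; the main subtlety throughout is bookkeeping the precise definition of ``endpoint'' against the ray-theoretic notions in the cited papers, and making sure that ``non-escaping landing point of a ray'' is correctly classified as an endpoint and not inadvertently as a point on a hair.
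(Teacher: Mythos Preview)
Your treatment of (a)--\ref{item:pinchedcantorbouquet} is essentially the paper's approach (with minor differences in which references are cited for \ref{item:pinchedcantorbouquet}), and the care you take in checking that a non-escaping landing point of a ray qualifies as an ``endpoint'' is well placed.

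The genuine gap is in \ref{item:nonlanding1} and \ref{item:nonlanding2}. For \ref{item:nonlanding2}, the mechanism you propose---fibres of non-injectivity of a semiconjugacy to the model---cannot produce unbounded connected sets: such fibres are sets of endpoints that get identified, hence discrete (or at worst countable), not continua. The paper's argument is of a quite different nature and comes from continuum theory. From \cite{Rem07} one obtains a dynamic ray $\gamma\colon(0,\infty)\to I(f_a)$ whose closure $\hat\gamma$ in $\Ch$ is an \emph{indecomposable continuum} that does not separate the sphere, with $\hat\gamma\cap I(f_a)=\gamma$ and $a\notin\hat\gamma$. An indecomposable continuum has uncountably many composants, each dense in $\hat\gamma$; exactly one of them contains $\gamma$, so every other composant is an unbounded connected subset of $J(f_a)\setminus I(f_a)$. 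Density in $J(f_a)$ then comes from pulling $\gamma$ back under iterated preimages (each pullback has the same properties since $a\notin\hat\gamma$).

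For \ref{item:nonlanding1}, merely citing ``non-landing rays'' from \cite{Rem07} does not suffice: a point in the accumulation set of one non-landing ray could, a priori, lie on a different hair or be the landing point of another ray, so you have not yet exhibited a point that is neither. The paper closes this with Mazurkiewicz's theorem \cite{Mazurkiewicz1929}: in a non-separating indecomposable continuum, the union of composants containing an accessible point is of first category. Hence $\hat\gamma$ contains points not accessible from its complement, in particular not accessible from $I(f_a)$; since also $\hat\gamma\cap I(f_a)=\gamma$, such a point is neither on a hair nor an endpoint.
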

\begin{proof}
  The first claim follows from \cite[Theorem~6.5]{schleicher-zimmer03}. The second
    follows from \cite[Proposition~4.5]{schleicher-zimmer03}, together with well-known
    estimates on exponential growth (see Lemma~\ref{lem:exponentialgrowth}) and the classification
    of path-connected components of $I(f_a)$ \cite[Corollary~4.3]{frs}. Compare also 
    \cite[Section~4]{alhabib-rempe15}. 

 When $f_a$ has an attracting periodic orbit, it was first 
   shown in \cite{bhattacharjee-devaney-2000} that the Julia set of $f_a$ is a ``bouquet''
     of hairs, where different hairs may share the same endpoint. This establishes~\ref{item:pinchedcantorbouquet}
     in this case. In \cite[Corollary~9.3]{Rem06}, the stronger statement is proved that $J(f_a)$ is a 
     \emph{pinched Cantor bouquet}. More precisely, the dynamics of $f_a$ on its Julia set can be 
     described as a quotient of that of $f_{\tilde{a}}$ on $J(f_{\tilde{a}})$, where $\tilde{a}\in (-\infty,-1)$, 
     by an equivalence relation on the endpoints of $J(f_{\tilde{a}})$. 

  As stated in~\cite[Remark on p.\ 1967]{Rem06}, these results also hold for exponential maps having
    a parabolic orbit, establishing~\ref{item:pinchedcantorbouquet}. However, the details of the proof are
    omitted in~\cite{Rem06}; they can be found in forthcoming work of M.\ Alhamd, which treats the general
    case of parabolic entire functions. 

  Finally, let us turn to~\ref{item:nonlanding1} and~\ref{item:nonlanding2}, 
    where $a\in J(f_a)$. If $a$ is neither on a hair nor an endpoint, then
    there is nothing to prove. Otherwise, by 
    \cite{Rem07}, there exists $\gamma\colon (0,\infty)\to I(f_a)$ with $\gamma(t)\to \infty$ as  $t\to\infty$,
    such that the closure $\hat{\gamma}$ of $\gamma$ in $\Ch$ is an indecomposable continuum. Moreover,
    the construction ensures that $\hat{\gamma}$ does not  separate the sphere, that 
    $\hat{\gamma}\cap I(f_a)=\gamma$, and that $a\notin \hat{\gamma}$. 

   As an indecomposable continuum, $\hat{\gamma}$ has uncountably many composants 
     \cite[Theorem~11.15]{nadler}, each of which is
     dense in $\hat{\gamma}$. Exactly one
     of these components contains $\gamma$, and by the above no other composant intersects the 
     escaping set. Thus every other composant is an unbounded connected subset of $J(f_a)\setminus I(f_a)$. 

   By \cite{Mazurkiewicz1929}, the union of composants of $\hat{\gamma}$ that contain a point accessible via 
    a curve in $\Ch\setminus \hat{\gamma}$ is of first category in $\hat{\gamma}$.  In particular, $\hat{\gamma}$ 
    contains a point that is not accessible from $I(f_a)$, proving~\ref{item:nonlanding1}.
   (It also follows directly from the construction in \cite{Rem07} that no point of $\hat{\gamma}$ is 
    an endpoint.)

  Moreover, let $\tilde{\gamma}$ be a connected component of $f_a^{-1}(\gamma)$; then it follows that 
     $\tilde{\gamma}$ has the same properties as $\gamma$. Since iterated preimages of  $\gamma$ are
     dense in  $J(f_a)$,  and unbounded connected sets of non-escaping points are dense in the closure of each
     preimage, the proof is complete.  
\end{proof}

%In order to prove the results we will also need the `blowing-up' property of the Julia set.
%Recall first that for a transcendental entire function $f$ the exceptional set is the set of points with a finite backwards orbit under $f$ and it contains at most one point. 
%\begin{lemma}
%\label{blowup}
%Let $f$ be a transcendental entire function, let $K$ be a compact set which does not meet the exceptional set and let $U$ be an open neighbourhood of $z \in J(f)$. Then there exists $N \in \mathbb{N}$ such that 
%$$f^n(U) \supset K,\;\text{for all}\;n \geq N.$$
%\end{lemma}

 \subsection*{Growth of exponential maps}

  It is well-known (compare \cite[Lemma~2.4]{schleicher-zimmer03}) that all exponential maps (and indeed all
    entire functions of finite order and positive lower order) share the same maximal order of growth of
    their iterates, namely iterated exponential growth. Hence we can use a single model function from this class 
    to describe maximal growth rates of exponential maps. For this purpose, it has become customary to use
    \begin{equation}
        F\colon [0,\infty) \to [0,\infty); \quad t \mapsto e^t-1.
    \end{equation}

   We shall make use of the following elementary fact; compare~e.g.\ Inequalities~(10.1) and~(10.2) in \cite{Rem06}. 

  \begin{lemma}[Iterated exponential growth]\label{lem:exponentialgrowth}
   Fix $K\geq 1$ and $a\in\C$. Then
    \begin{equation}\label{eqn:Finequality}
          F(\re z - 1 ) + K \leq \lvert f_a(z) \rvert \leq F(\re z + 1) - K 
     \end{equation}
    for all $z\in\C$ with $\re z \geq \ln(1 + 2(\lvert a \rvert + K))$.

  Furthermore, for all $R \geq \max(3, \ln(1 + 2(\lvert a \rvert + K)))$ and all $n\geq 1$,  
      \[ R < F^n(R-1) + K \leq M^n(R,f_a) \leq F^n(R+1) - K.\]
     %(In particular, $R\geq R_0(f_a)$.) 
  \end{lemma}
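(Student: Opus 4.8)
The whole statement is elementary, and my plan is to reduce it to the triangle inequality together with the two one-line estimates $e^{x}(1-e^{-1})>\tfrac12 e^{x}$ and $e^{x}(e-1)>e^{x}$, keeping the hypothesis $K\ge1$ available throughout in order to absorb additive shifts of $1$ into shifts of $K$.

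First I would prove the two purely real inequalities
\[ F(x-1)+K\le e^{x}-\lvert a\rvert\qquad\text{and}\qquad e^{x}+\lvert a\rvert\le F(x+1)-K, \]
valid whenever $x\ge c\defeq\ln\bigl(1+2(\lvert a\rvert+K)\bigr)$. After rearranging, these read $\lvert a\rvert+K-1\le e^{x}-e^{x-1}=e^{x}(1-e^{-1})$ and $\lvert a\rvert+K+1\le e^{x+1}-e^{x}=e^{x}(e-1)$; since $x\ge c$ gives $e^{x}\ge 1+2(\lvert a\rvert+K)$, both right-hand sides dominate the corresponding left-hand sides with room to spare. The first assertion of the lemma is then immediate: for $\re z\ge c$ the triangle inequality gives $e^{\re z}-\lvert a\rvert\le\lvert f_a(z)\rvert\le e^{\re z}+\lvert a\rvert$, and substituting $x=\re z$ into the two displayed inequalities yields~\eqref{eqn:Finequality}.

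For the statement about iterates I would next record the one-step estimate
\[ F(r-1)+K\le M(r,f_a)\le F(r+1)-K\qquad\text{for all }r\ge c, \]
which follows from the same inequalities with $x=r$, using $\lvert f_a(r)\rvert\ge e^{r}-\lvert a\rvert$ for the lower bound and $\lvert f_a(z)\rvert\le e^{\re z}+\lvert a\rvert\le e^{r}+\lvert a\rvert$ on $\lvert z\rvert=r$ for the upper bound. In particular $M(r,f_a)\ge e^{r-1}-1+K\ge e^{r-1}>r$ for every $r\ge c$ (note $c>1$, as $K\ge1$); hence $\bigl(M^{n}(R,f_a)\bigr)_{n\ge0}$ is strictly increasing and stays $\ge R\ge c$, so the one-step estimate is applicable at each iterate. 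Now I would induct on $n$: the case $n=1$ is the one-step estimate with $r=R$, and for the inductive step I set $r\defeq M^{n-1}(R,f_a)$ and apply the one-step estimate, converting its shifted arguments via the inductive hypothesis and $K\ge1$. Concretely, $r\le F^{n-1}(R+1)-K$ gives $r+1\le F^{n-1}(R+1)$, so $F(r+1)\le F^{n}(R+1)$ by monotonicity of $F$; and $r\ge F^{n-1}(R-1)+K$ gives $r-1\ge F^{n-1}(R-1)$, so $F(r-1)\ge F^{n}(R-1)$. This delivers $F^{n}(R-1)+K\le M^{n}(R,f_a)\le F^{n}(R+1)-K$. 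The remaining strict inequality follows because $F(t)\ge t$ on $[0,\infty)$ forces $F^{n}(R-1)\ge F(R-1)=e^{R-1}-1$, whence $F^{n}(R-1)+K\ge e^{R-1}-1+K\ge e^{R-1}>R$ (since $R\ge3$).

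I do not expect a genuine obstacle. The only point requiring care is the bookkeeping in the inductive step, where $K\ge1$ has to be invoked at exactly the right places to turn the shifts $r\pm1$ into $F^{n-1}(R\pm1)$; one should also double-check that the two elementary exponential inequalities of the first step really have enough slack for every admissible $\lvert a\rvert$ and $K$ — which they do, since the bound $e^{x}\ge1+2(\lvert a\rvert+K)$ coming from $x\ge c$ is comfortably stronger than what is needed.
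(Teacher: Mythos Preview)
Your proof is correct and follows essentially the same approach as the paper: establish~\eqref{eqn:Finequality} from the triangle inequality and the elementary exponential estimates, deduce the one-step bound $F(r-1)+K\le M(r,f_a)\le F(r+1)-K$, and then induct using $K\ge1$ to absorb the $\pm1$ shifts. Your bookkeeping in the inductive step is slightly more explicit than the paper's, but the argument is the same in substance.
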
 
  \begin{proof}
     To prove~\eqref{eqn:Finequality}, set $r \defeq \re z$. Then 
     \begin{align*}   
           F(r-1) + K &< e^{r-1} + K = \frac{1}{e} e^r + K 
                 \leq \frac{1}{2} e^r + K  \\
     & = e^r - \frac{e^r}{2} + K  \leq e^r - \lvert a \rvert 
              = \lvert e^z\rvert - \lvert a \rvert \leq \lvert e^z + a\rvert = 
\lvert f_a(z)\rvert \end{align*}
      by assumption on $r$. 
     Similarly, 
\[ F(r+1) - K = e\cdot e^r - K - 1  \geq 2e^r - K - 1 \geq e^r +  \lvert a \rvert \geq  \lvert f_a(z)\rvert. \]

   Now let $R$ be as in the second claim. Then, by~\eqref{eqn:Finequality}, 
      \begin{equation}\label{eqn:Mestimate}
           F(R+ 1 ) - K \geq M(R, f_a) \geq \lvert f_a(R) \rvert \geq F(R-1)+K \geq F(R-1) > R,
      \end{equation}
    where we use the fact that $e^{R-1} > R+1$ for $R\geq 3$.    Applying~\eqref{eqn:Mestimate} inductively, we see that
      \[ M^{n+1}(R,f_a) + K \leq F(M^n(R,f_a) + 1 ) \leq F(M^n(R,f_a) + K)\leq \dots  \leq 
                           F^{n+1}(R+1), \]
     and analogously for the lower bound. 
  \end{proof}
\begin{rmk}
   It follows, in particular, that $R(f_a)\leq R$ for $R$ as in the statement of the lemma. 
    (Recall that $R(f_a)$ was defined in~\eqref{eqn:R0}.) However, in fact 
     \begin{equation}\label{eqn:Rf_a}
       R(f_a) = 0 
     \end{equation}
     for all $a\in\C$, which means that $A_R(f_a)$ is defined for all $R>0$, and simplifies our statements in the 
     following.  

    Indeed, if $R + e^{-R}<-\re a$, then 
      \[ M(R,f_a) \geq -\re f_a(-R) = -\re a - e^{-R} > R. \]
     On the other hand, if $R + e^{-R}\geq -\re a $, then
      \[ M(R,f_a) \geq \re f_a(R) = e^R + \re a \geq e^R - R - e^{-R} = 2\sinh(R) - R. \]
      Since $\sinh(R)>R$ for $R>0$, we have $M(R,f_a) > R$ in this case also, as claimed. 
\end{rmk}

 A simple consequence of Lemma~\ref{lem:exponentialgrowth}, which is crucial to
    our proof of Theorem~\ref{thm:main}, is the following. Suppose that
   a starting point $z$ has large real part; then we know by the above that $\lvert f_a(z)\rvert$ is very large. If we know that 
    $\lvert \im f_a(z) \rvert$ and $-\re f_a(z)$ are comparatively small compared to this value, then clearly $\re f_a(z)$ is again
    very large and positive. Under suitable hypotheses, we can continue inductively and conclude that $z$ escapes (quickly) to infinity.
    Again, this is a well-known argument in the study of exponential maps; compare e.g.\ 
     \cite[Proof of Theorem~4.4]{schleicher-zimmer03}.
    We shall use it in the following form. 

 \begin{cor}[Continued growth] \label{cor:growth}
   Let $a\in\C$ and $\mu \geq 0$. Then there is $K\geq \mu+2$ such that the following holds for all $z\in\C$ with 
      $\re z \geq K$. If $n\geq 0$ is such that 
    \begin{equation}\label{eqn:boundsforgrowth}
      \max\bigl ( -\re f_a^k(z) , \lvert \im f_a^k(z) \rvert \bigr) \leq F^k(\mu) \end{equation}
     for $0\leq k < n$, then 
       \[ \lvert f_a^n(z) \rvert \geq F^n(\re z -2). \]
 \end{cor}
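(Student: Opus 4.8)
The plan is to combine the two-sided estimate from Lemma~\ref{lem:exponentialgrowth} with a straightforward induction on $n$. First I would fix the constant: choose $K\geq \mu+2$ large enough that $K\geq \ln(1+2(\lvert a\rvert + 1))$, so that inequality~\eqref{eqn:Finequality} (with the choice $K=1$ there, say) applies to every point of real part at least $K-\text{something}$; I will want a little room, so I would also insist $K\geq 3$ and that the quantity $\re z - 2$ stays above the threshold of Lemma~\ref{lem:exponentialgrowth} throughout the orbit. The point of taking $K\geq \mu+2$ is that then $\re z - 2 \geq K - 2 \geq \mu$, so $F^k(\re z - 2)\geq F^k(\mu)$ for all $k$, and $F$ is increasing and (on $[\mu,\infty)\subset[0,\infty)$) maps into $[0,\infty)$.

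The induction hypothesis I would carry is: for the given $n$ and all $0\leq k\leq n$, the real part $\re f_a^k(z)$ is at least $F^k(\re z - 2) + 2$ (say), which in particular keeps it above the growth threshold and above $\mu$. The base case $k=0$ is immediate since $\re z \geq K \geq \mu+2 \geq F^0(\re z-2)+2$. For the inductive step, suppose the bound holds at level $k<n$. By the hypothesis~\eqref{eqn:boundsforgrowth} we control $\lvert \im f_a^k(z)\rvert$ and $-\re f_a^k(z)$ by $F^k(\mu)$, which in turn is dominated by $F^k(\re z-2)$, hence (crucially) is small compared to $\lvert f_a^{k+1}(z)\rvert$. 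Concretely: $\lvert f_a^{k+1}(z)\rvert = \lvert f_a(f_a^k(z))\rvert \geq F(\re f_a^k(z) - 1) + K$ by~\eqref{eqn:Finequality}, and
\[
   \re f_a^{k+1}(z) \geq \lvert f_a^{k+1}(z)\rvert - 2\lvert \im f_a^{k+1}(z)\rvert \geq F(\re f_a^k(z)-1) + K - 2F^k(\mu),
\]
wait — more carefully, $\re w \geq \lvert w\rvert - \lvert \im w\rvert$ always, and here $\lvert \im f_a^{k+1}(z)\rvert\leq F^{k+1}(\mu)$ if $k+1<n$, while if $k+1=n$ we do not need a bound on the imaginary part, only on the modulus, which is exactly what the conclusion asks. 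So for $k+1<n$ I get $\re f_a^{k+1}(z) \geq F(\re f_a^k(z)-1) - F^{k+1}(\mu) + K$, and using $\re f_a^k(z)\geq F^k(\re z-2)+2$ together with monotonicity of $F$ and the fact that $K$ absorbs the $F^{k+1}(\mu)$ term (since $F^{k+1}(\mu)=F^{k+1}(\mu)$ and we took $K$ large relative to a fixed quantity — here I may need $K$ to grow, but $\mu$ is fixed so $F^{k+1}(\mu)$ could be huge; the resolution is that $F(\re f_a^k(z)-1)\geq F(F^k(\re z-2)+1)$ and this dominates $2F^{k+1}(\mu)+2$ because $F^k(\re z-2)\geq F^k(\mu)$ and $F$ is enormously expansive, giving $F(F^k(\re z-2)+1)\geq 2F(F^k(\mu))=2F^{k+1}(\mu)$ once $\re z-2\geq\mu$, say with a tiny bit more room). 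This restores $\re f_a^{k+1}(z)\geq F^{k+1}(\re z-2)+2$, completing the step, and finally at $k=n$ the modulus bound $\lvert f_a^n(z)\rvert \geq F(\re f_a^{n-1}(z)-1)\geq F(F^{n-1}(\re z-2)+1)\geq F^n(\re z-2)$ gives the conclusion.

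The main obstacle is bookkeeping the constant $K$: I must check that a \emph{single} $K$, depending only on $a$ and $\mu$ (not on $n$ or $z$), simultaneously (i) pushes $\re z-2$ above the Lemma~\ref{lem:exponentialgrowth} threshold, and (ii) makes the self-improving inequality $F(\re f_a^k(z)-1)\geq F^{k+1}(\re z-2)+ (\text{error terms bounded by }F^{k+1}(\mu))$ hold at every level. The key observation making (ii) work is that $\re z-2\geq K-2\geq\mu$ forces $F^k(\re z-2)\geq F^k(\mu)$ for all $k$ at once, so the error terms are always dominated by the main term once we have a fixed multiplicative cushion, and such a cushion is produced by a single application of $F$ (since $F(t+1)-F(t)\to\infty$). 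I would therefore isolate, as the one genuine inequality to verify, a clean statement of the form: \emph{if $t\geq\mu+1$ then $F(t+1)\geq 2F(t)+2$} (or similar), which is elementary from $e^t-1\geq 2(e^t-1)+2 \iff$ ... no — rather $e^{t+1}-1 = e\cdot e^t - 1 \geq 2e^t \geq 2(e^t-1)+2$ for $e^t\geq 2$, i.e.\ $t\geq\ln 2$. This handles all the error absorption uniformly, and the rest is the routine induction sketched above.
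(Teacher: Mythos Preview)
Your approach is correct and essentially the same as the paper's: both strengthen the induction hypothesis so that the error term $F^{k}(\mu)$ coming from~\eqref{eqn:boundsforgrowth} is absorbed via the doubling inequality $F(t+1)\geq 2F(t)$ together with $\re z-2\geq\mu$; you carry $\re f_a^k(z)\geq F^k(\re z-2)+2$, while the paper equivalently carries $\lvert f_a^n(z)\rvert\geq F^n(\re z-2)+F^n(\mu)+2$. The paper's explicit constant is $K=\max\{2+\ln(5+\lvert a\rvert),\,\mu+2\}$, which matches your list of requirements.
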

 \begin{proof}
  Set $K\defeq \max\{2 + \ln(5+\lvert a \rvert), \mu +2\}$.
    Suppose that $r\defeq \re z \geq K$, and that $n$ is as in the statement of the corollary. 
    Observe that the claim is trivial for $n=0$. 
     We shall prove, by induction on $n\geq 1$, the stronger claim 
       \begin{equation}\label{eqn:induction} \lvert f_a^n(z) \rvert \geq  F^n(r-2) + F^n(\mu)+2. \end{equation}
     
     Let $n\geq 1$, and suppose that the claim holds for smaller values of $n$. 
      Then $$\re f_a^{n-1}(z)\geq F^{n-1}(r-2)+2\geq r \geq K.$$ This is true trivially for $n=1$, and
      by the inductive hypothesis~\eqref{eqn:induction} and assumption~\eqref{eqn:boundsforgrowth} for $n>1$. 
      By Lemma~\ref{lem:exponentialgrowth}, 
    \[  \lvert f_a^n(z) \rvert \geq F(\re f_a^{n-1}(z)-1) + 2  \geq
                         2 F^n(r-2) + 2 \geq F^n(r-2) + F^n(\mu) + 2,    \]
     as claimed.
 \end{proof}

\subsection*{Separation}

  Recall that two points $a,b\in X$ are \emph{separated} in a metric space $X$
      if there is a closed and open (``clopen'') set  $U\subset X$ that contains $a$ but not $b$. We shall use
      the following simple lemma only in the case where $X \subset \Ch$ and $x= \infty$.

  \begin{lemma}\label{lem:totseparatedCh}
    Let $X$ be a metric space and $x \in X$. Suppose that $A\defeq X \setminus \{x\}$ is totally separated. Assume furthermore that every point of $A$ is separated from $x$ in
    $X$. Then $X$ is totally separated. 
 \end{lemma}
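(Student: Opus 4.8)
The plan is to prove Lemma~\ref{lem:totseparatedCh} directly from the definitions, combining a clopen set that separates two given points of $A$ (available because $A$ is totally separated) with clopen sets that separate each of these points from $x$ (available by hypothesis). The only subtlety is that ``clopen'' has a different meaning in $A$ than in $X$: a clopen subset of $A$ need not be closed in $X$, because its closure in $X$ may pick up the point $x$. So the whole argument amounts to a bit of careful bookkeeping about which ambient space we are working in.

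More precisely, let $p, q \in X$ be distinct; I must produce a clopen $U \subset X$ with $p \in U$, $q \notin U$. If one of them, say $q$, equals $x$, then the hypothesis that every point of $A$ is separated from $x$ in $X$ gives exactly such a $U$ directly. So assume $p, q \in A$. Since $A$ is totally separated, there is a set $V$, clopen \emph{in $A$}, with $p \in V$ and $q \notin V$. Now $A = X \setminus \{x\}$ is an open subset of $X$ (as $X$ is metric, hence $T_1$, so $\{x\}$ is closed), and $V$ is open in $A$, hence open in $X$. It remains to fix up the failure of $V$ to be closed in $X$: the closure $\overline{V}$ of $V$ in $X$ is contained in $V \cup \{x\}$, because $V$ is closed in $A$. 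Thus either $\overline{V} = V$ (and we are done with $U = V$), or $\overline{V} = V \cup \{x\}$, i.e. $x \in \overline{V}$.

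In the latter case I need to remove a neighbourhood of $x$ without disturbing $p$ (and keeping $q$ out). Here I use that $p \in A$ is separated from $x$ in $X$: pick a clopen $W \subset X$ with $p \in W$, $x \notin W$. Then $x \notin W \supset \{p\}$. Set $U \defeq V \cap W$. This is open in $X$ (intersection of two open sets) and its closure in $X$ satisfies $\overline{U} \subset \overline{V} \cap \overline{W} = \overline{V} \cap W \subset (V \cup \{x\}) \cap W = V \cap W = U$, using $x \notin W$; hence $U$ is closed in $X$ as well. Finally $p \in V \cap W = U$ and $q \notin V \supset U$, so $U$ is the desired clopen separating set. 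Since $p, q$ were an arbitrary pair of distinct points of $X$, this shows $X$ is totally separated.

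I do not expect any real obstacle here; the ``hard part'', such as it is, is simply keeping straight the distinction between closure in $A$ and closure in $X$ and remembering to invoke the separation-from-$x$ hypothesis precisely at the point where $\overline{V}$ threatens to contain $x$. One could also phrase the argument slightly more symmetrically by first replacing $V$ with $V \cap W$ from the outset, which both makes $V$ clopen in $X$ and keeps $q$ excluded, at the cost of having to also handle the case $q \in W$; the version above avoids that by only shrinking when necessary.
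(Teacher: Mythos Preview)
Your proof is correct and follows essentially the same approach as the paper: intersect a clopen-in-$A$ separator with a clopen-in-$X$ set that excludes $x$, then check the result is clopen in $X$. The only difference is cosmetic---the paper skips your case split and forms $V\cap W$ immediately (since $q\notin V$ already forces $q\notin V\cap W$, no extra case is needed).
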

 \begin{proof}
   Let $a,b\in X$ with $a\neq b$. If one of $a$ and $b$ is equal to $x$, then by assumption $a$ and $b$ are separated in $X$. 
    Otherwise, let $U$ be a clopen set in $A$ containing $a$ but not $b$, and let $V$ be a clopen set in $X$ containing $a$ but not $x$. 
   Then $V$ is open, but not necessarily closed, in $A$.

   Set $W\defeq U\cap V$; then $W$ is open in $X$ and closed in $A$. Furthermore, 
     $x\notin \overline{V} \supset \overline{W}$, and hence $W$ is also closed in $X$. So $W$ is 
     a clopen set of $X$ containing $a$ but not $b$, and the proof is complete.     
 \end{proof}

\subsection*{Separation and spiders' webs} %%\label{sec:spidersweb}
 
 We shall use the following notion.

 \begin{dfn}[Separation in $\Ch$]\label{defn:separationCh}
  If $x,y\in\Ch$, we say that $E\subset \Ch$ \emph{separates} $x$ from $y$ 
      if $x$ and $y$ are separated in
      $(\C\setminus E)\cup \{x,y\}$. 

    Analogously, $E$ separates a set $X\subset \Ch$ from a point $y\in\Ch$ if
      there is a clopen set $U\subset (\C\setminus E)\cup X \cup \{y\}$ containing $X$ but not $y$. 
\end{dfn}

      We now prove Theorem~\ref{thm:spiderswebs}, in the following slightly more precise version. 

 \begin{theorem}[Characterisation of spiders' webs]\label{thm:spiderswebsprecise}
   Let $E\subset\C$ (connected or otherwise). The following are equivalent.%%
     \begin{enumerate}[(a)]
        \item There is a sequence of domains $G_n$ as in the definition of a spider's web.\label{item:spidersweb}
        \item $E$ separates every compact set in $\C$ from $\infty$.\label{item:compactseparation}
        \item $E$ separates every finite point $z$ from $\infty$.  \label{item:pointseparation}
     \end{enumerate}

   Suppose now that one of these equivalent conditions holds. Then the following are equivalent:
    \begin{enumerate}[(1)]
        \item $E$ is connected (that is, $E$ is a spider's web);\label{item:fullspidersweb}
         \item there is a dense collection of unbounded connected subsets of $E$;\label{item:unboundedcomponents}
         \item $E\cup\{\infty\}$ is connected. \label{item:connectedwithinfinity}
    \end{enumerate}
 \end{theorem}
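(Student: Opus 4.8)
The plan is to establish the two cycles of implications $\mathrm{(a)}\Rightarrow\mathrm{(b)}\Rightarrow\mathrm{(c)}\Rightarrow\mathrm{(a)}$ and, under the standing hypothesis (a)--(c), $\mathrm{(1)}\Rightarrow\mathrm{(2)}\Rightarrow\mathrm{(3)}\Rightarrow\mathrm{(1)}$.

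\emph{First cycle.} For $\mathrm{(a)}\Rightarrow\mathrm{(b)}$: given a compact $K\subset\C$, compactness together with $G_n\subset G_{n+1}$ and $\bigcup_n G_n=\C$ gives an $n$ with $K\subset G_n$. One then checks that $U:=G_n\cap\bigl((\C\setminus E)\cup K\cup\{\infty\}\bigr)$ equals $(G_n\setminus E)\cup K$, and, using $\partial G_n\subset E$ and that $\overline{G_n}$ is bounded, that it also equals $\overline{G_n}\cap\bigl((\C\setminus E)\cup K\cup\{\infty\}\bigr)$; hence $U$ is clopen in $(\C\setminus E)\cup K\cup\{\infty\}$, contains $K$ and omits $\infty$, so $E$ separates $K$ from $\infty$. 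The implication $\mathrm{(b)}\Rightarrow\mathrm{(c)}$ is the case $K=\{z\}$.

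\emph{The heart of the matter is $\mathrm{(c)}\Rightarrow\mathrm{(a)}$}, which I would prove in two steps. \emph{Step 1 (upgrade to compact sets).} Given compact $K$, apply (c) at each $z\in K$ to get a clopen $U_z$ in $(\C\setminus E)\cup\{z,\infty\}$ with $z\in U_z\not\ni\infty$, together with an open $V_z\subset\C$ with $V_z\cap\bigl((\C\setminus E)\cup\{z,\infty\}\bigr)=U_z$; cover $K$ by finitely many $V_{z_1},\dots,V_{z_m}$ and assemble $U_{z_1},\dots,U_{z_m}$ into a single clopen set separating $K$ from $\infty$ --- a finite-union bookkeeping in the spirit of Lemma~\ref{lem:totseparatedCh}. \emph{Step 2 (filling in).} Construct $G_n$ inductively: choose radii $R_n\to\infty$ with $\overline{D(0,R_n)}\supset\overline{G_{n-1}}$; by Step 1 there is a clopen set $U$ in $(\C\setminus E)\cup\overline{D(0,R_n)}\cup\{\infty\}$ containing $\overline{D(0,R_n)}$ and omitting $\infty$. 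Put $L:=\overline{U}$ (closure in $\Ch$), a compact subset of $\C$ with $L\setminus U\subset E$; let $W$ be the component of $\Ch\setminus L$ containing $\infty$, and let $G_n$ be the component of $\Ch\setminus\overline{W}$ containing $D(0,R_n)$. Since open subsets of $\Ch$ are locally connected, $W$ and $G_n$ are domains; since $\overline{W}$ is a continuum containing $\infty$, its complementary component $G_n$ is bounded and simply connected; one has $D(0,R_n)\subset G_n$ (hence $G_n\supset G_{n-1}$ and $\bigcup_n G_n=\C$) and $\partial G_n\subset\partial W\subset E$. The main obstacle is precisely this last inclusion: because $U$ is only relatively open in $(\C\setminus E)\cup\overline{D(0,R_n)}\cup\{\infty\}$, showing $U\subset\operatorname{int}L$ --- and thereby that $\partial W$ is disjoint from $U$, so lies in $L\setminus U\subset E$ --- needs careful point-set arguments, and is where the fact that $E$ is neither open nor closed must be handled.

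\emph{Second cycle} (assuming (a)--(c)). First observe $E$ is unbounded: for each $R$ we have $\overline{D(0,R)}\subset G_n$ for large $n$, so $\partial G_n$ (nonempty, as $G_n\neq\C$) lies outside $D(0,R)$; thus $\bigcup_n\partial G_n\subset E$ is unbounded. Hence $\mathrm{(1)}\Rightarrow\mathrm{(2)}$ is immediate, $E$ itself being an unbounded connected subset. For $\mathrm{(2)}\Rightarrow\mathrm{(3)}$: if $C\subset E$ is unbounded and connected then $\infty\in\overline{C}$ (closure in $\Ch$), so $C\subset C\cup\{\infty\}\subset\overline{C}$ and $C\cup\{\infty\}$ is connected; a dense family of such $C$ yields a connected set $\bigl(\bigcup C\bigr)\cup\{\infty\}$ dense in $E\cup\{\infty\}$, so $E\cup\{\infty\}$ is connected. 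Finally $\mathrm{(3)}\Rightarrow\mathrm{(1)}$, by contraposition: suppose $E=U\sqcup(E\setminus U)$ with both parts nonempty and clopen in $E$. By the classical fact that the boundary of a bounded simply connected plane domain is a continuum, each $\partial G_n$ lies entirely in $U$ or entirely in $E\setminus U$, and one alternative holds for infinitely many $n$; say $\partial G_n\subset U$ for arbitrarily large $n$. Choosing $q\in E\setminus U$ and then $n$ with $q\in G_n$ and $\partial G_n\subset U$, the set $V:=(E\setminus U)\cap G_n$ is nonempty, bounded (it lies in $G_n$), proper (since $E$ is unbounded), and clopen in $E$ --- closedness using $(E\setminus U)\cap\partial G_n=\emptyset$. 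Being bounded, $V$ is also clopen in $E\cup\{\infty\}$ and omits $\infty$, so $E\cup\{\infty\}$ is disconnected; the case $\partial G_n\subset E\setminus U$ for arbitrarily large $n$ is symmetric. Thus $E$ disconnected forces $E\cup\{\infty\}$ disconnected, which completes the proof.
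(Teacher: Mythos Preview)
Your plan is correct and follows essentially the same route as the paper: the same cycle $\mathrm{(a)}\Rightarrow\mathrm{(b)}\Rightarrow\mathrm{(c)}$ with the nontrivial work in $\mathrm{(c)}\Rightarrow\mathrm{(a)}$, split (as in the paper) into ``upgrade to compact sets'' followed by ``extract a bounded simply connected $G$ with $\partial G\subset E$''; and the second cycle is handled identically, via the connectedness of each $\partial G_n$ and the pigeonhole argument you describe.

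Where you and the paper differ is only in bookkeeping. For the extraction of $G$, the paper is more direct than your $L,W,G_n$ construction: it picks an open $U\subset\Ch$ with $U\cap A=U'$ (where $A=(\C\setminus E)\cup K\cup\{\infty\}$), observes that $\partial U\subset\Ch\setminus A\subset E$ and that $U$ is bounded, and then simply takes $G$ to be the fill of the component of $U$ containing $K$. The assertion $\partial U\subset E$ is exactly the ``obstacle'' you flag, dispatched in one line; your detour through $L=\overline{U}$ and the complementary components reaches the same destination but with more moving parts. Similarly, for your Step~1 the paper avoids ``assembling'' clopen sets $U_{z_i}$ that live in \emph{different} ambient spaces $(\C\setminus E)\cup\{z_i,\infty\}$ (which is the genuinely awkward part of your sketch): instead it first converts each pointwise separation into a bounded open $U_x\subset\C$ with $x\in U_x$ and $\partial U_x\subset E$, and then uses the elementary inclusion $\partial\bigl(\bigcup_i U_{x_i}\bigr)\subset\bigcup_i\partial U_{x_i}\subset E$ to get the separation of $K$ from $\infty$. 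You may find it cleanest to adopt this order: first turn clopen separations into honest open sets in $\Ch$ with boundary in $E$, and only then take unions and fills.
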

 \begin{proof}
    Clearly,~\ref{item:compactseparation} implies
       \ref{item:pointseparation}. Furthermore, 
     \ref{item:spidersweb} implies~\ref{item:compactseparation}. Indeed, let $G_n$ be the domains
      from the definition of a spider's web. If $X\subset \C$ is compact and $n$ is sufficiently large that
       $X\subset G_n$, then $(G_n\setminus E)\cup X$ is a clopen subset of $(\C\setminus E)\cup X\cup\{\infty\}$, as required. 

    Now suppose that~\ref{item:compactseparation} holds. We claim  that for every nonempty, compact and connected
     $K\subset\C$, there is a 
      bounded simply-connected domain $G=G(K)$ with $K\subset G$ and $\partial G\subset E$.

Indeed,
     as $E$ separates  $K$ and $\infty$, by definition there is a relatively closed and  open subset
         \[U'\subset A \defeq (\C\setminus E)\cup  K \cup \{\infty\} \] 
      such  that $K\subset U'\subset\C$.  
      Let  $U\subset\Ch$ be open such that  $U'=U\cap  A$. Since $U'$ is relatively closed in $A$, 
      we see that  $U$ is bounded and $\partial U\subset \Ch \setminus A \subset E$. 

      Now let $V$ be the connected component
      of $U$ containing $K$, and let $G=G(K)$ be the fill of $V$. (That is, $G$ consists of $V$ 
     together with all bounded complementary components). Clearly $\partial G\subset \partial U\subset E$. 

    So we can define a sequence of simply-connected domains by letting
      $K_0$ be the disc $\overline{D(0,1)}$, and defining inductively $G_j\defeq G(K_j)$ and
      $K_{j+1}\defeq \overline{D(0,j)} \cup \overline{G_j}$. The domains $G_j$ satisfy the  requirements in  the definition of
       a spider's web, so~\ref{item:spidersweb} holds. 
    
    Finally, suppose~\ref{item:pointseparation} holds. Let $K\subset\C$ be a compact set. Then for every
     $x\in K$ there is a bounded open set $U\subset\C$ such that $x\in U$ and $\partial U \subset E$.
   
    Since $K$ is compact, there are $k\in\N$ and $U_1,\dots,U_k$ as above such that $K\subset U\defeq \bigcup U_j$. 
       Clearly 
      \[ \partial  U \subset \bigcup \partial U_j  \subset E, \]
      and $U$ is bounded. So $\partial U$  separates  $K$ from $\infty$. 

  This completes the proof of the equivalence of the three 
     conditions~\ref{item:spidersweb} to~\ref{item:pointseparation}. 
   For the final statement, first observe that~\ref{item:fullspidersweb} $\Rightarrow$~\ref{item:unboundedcomponents}
     $\Rightarrow$~\ref{item:connectedwithinfinity} for all unbounded sets $E$. 

    Clearly the opposite implications do not hold in general, so suppose now 
     that~\ref{item:spidersweb} holds, and that
      $E$ is disconnected. We must show that $E\cup\{\infty\}$ is also disconnected. 
 
   Let  $(G_n)$ be the sequence of domains from~\ref{item:spidersweb}. 
     If  $U$ and $V$
      are disjoint nonempty clopen subsets of $E$ with  $U\cup V = E$, then at least one of these sets, say $U$,
      must contain $\partial G_n$ for infinitely many $n$. Choose such $n$ sufficiently large that 
      also  $V\cap G_n\neq \emptyset$; then it follows that $V\cap G_n$ is a bounded clopen subset of $E$, 
      and hence is also a nonempty and nontrivial proper clopen subset of $E\cup\{\infty\}$.
  \end{proof}

\begin{rmk}
  In~\ref{item:pointseparation}, it is crucial to require separation for all  finite points,
   not just those in  $\Ch\setminus E$. That is, $E$ being a spider's web is a stronger condition  than
   requiring that the quasicomponent of $\infty$ in $\Ch\setminus E$ is a singleton. 
   (Recall that the quasicomponent of a point  $x$ in  a metric space $X$ consists of all points of $X$ not
    separated from  $x$.) 

   This is true even in the case where  $\Ch\setminus E$ is totally separated. Indeed, let  $A\subset\Ch$ be a connected
    set containing $0$ and $\infty$, and having an explosion  point at $0$.
    Then $E\defeq \{0\}\cup \Ch\setminus A$ is not a spider's web, as $0$ is not separated from  infinity in 
     $A =  (\Ch\setminus E)\cup  \{0\}$,  but $\Ch\setminus E =  A\setminus\{0\}$ is  totally separated.  
\end{rmk}

\section{The exponential family}
\label{section_exp}
  
 Theorem~\ref{thm:main} will follow easily from the following result. Recall the definition of
    %$R_0$ from~\eqref{eqn:R0}, and that of
     separation in $\Ch$ (Definition~\ref{defn:separationCh}).  

 \begin{theorem}[Separation using fast escaping points]\label{thm:fastseparation}
   Let $f_a(z) = e^z+a$, and assume that $a\in F(f_a)$. Then, for all $R > 0$ 
    and all $z_0\in \C$, $z_0$  is separated 
    from infinity by $A_R(f_a)\cup F(f_a)$. 
 \end{theorem}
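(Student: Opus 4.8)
The plan is to construct, for each $R>0$ and each $z_0\in\C$, a bounded set $\M\subset\C$ that separates $z_0$ from $\infty$ and whose boundary lies in $A_R(f_a)\cup F(f_a)$. The model for $\M$ is indicated in Figure~\ref{subfig:M}: take a left half-plane $\mathcal H$ together with a full preimage $f_a^{-1}(\sigma)$ of a suitable arc $\sigma$ running out to $\infty$ inside the Fatou component $U_1$ containing $a$. The arc $\sigma$ should be a tail of the curve $\Gamma$ produced by Proposition~\ref{prop:attractingparabolic}, along which $\re z\to\infty$; since $\Gamma\subset F(f_a)$, every point of $\sigma$ lies in the Fatou set. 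The preimage $f_a^{-1}(\sigma)$ is a $2\pi i$-periodic family of curves, each tending to $\infty$ with real part tending to $+\infty$ (because $\re\to+\infty$ on $\sigma$ forces $|e^z|=|w-a|\to\infty$ on the preimage, and the imaginary parts stay in bounded strips). The union $\M=\mathcal H\cup f_a^{-1}(\sigma)$ then has the crucial property that its boundary consists of a vertical line $\{\re z = -c\}$ (inside $F(f_a)$ if $c$ is large, since $\mathcal H\subset F(f_a)$) together with pieces of these preimage curves.

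**Next**, the key points to check are: (i) $\M$, suitably truncated, separates $z_0$ from $\infty$; (ii) $\partial\M\subset A_R(f_a)\cup F(f_a)$. For (i): the curves in $f_a^{-1}(\sigma)$ are spaced $2\pi i$ apart and each escapes to the right, so together with the left half-plane they cut the plane into "rectangular" regions, each bounded; the region containing $z_0$ is bounded, and its boundary lies in $\M$. (One must arrange $\mathcal H$ and $\sigma$ so that $z_0$ is not itself on $\partial\M$, which is easy by perturbing $c$ and the choice of tail of $\Gamma$.) For (ii): points of $\partial\M$ on the line $\{\re z=-c\}$ lie in $\mathcal H\subset F(f_a)$ for $c$ large. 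The remaining boundary points lie on curves $\tilde\gamma\subset f_a^{-1}(\sigma)$; since $\sigma\subset F(f_a)$ and $F(f_a)$ is completely invariant, $\tilde\gamma\subset F(f_a)$ as well. So in fact $\partial\M\subset F(f_a)$ — but wait, we need $\partial\M$ to consist of points that are either fast escaping or in the Fatou set, and $F(f_a)$ alone suffices here; the $A_R(f_a)$ clause is needed for the postsingular and Fatou's-function variants rather than this case, or to handle the parts of $\tilde\gamma$ that escape. Actually the honest statement: the preimage curves $\tilde\gamma$ consist entirely of Fatou points (they map into $U_1\subset F(f_a)$), so $\partial\M\subset F(f_a)$, which is contained in $A_R(f_a)\cup F(f_a)$.

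**The main obstacle** will be the bookkeeping that makes $\partial\M$ genuinely disjoint from $z_0$ and genuinely bounded, i.e.\ verifying that the region of $\C\setminus\M$ containing $z_0$ is one of the bounded "cells" rather than an unbounded piece. This requires knowing that the preimage curves $\tilde\gamma_k$ ($k\in\Z$), translated by $2\pi i k$, together with $\{\re z=-c\}$, genuinely disconnect the plane into bounded pieces on the right — which follows because each $\tilde\gamma_k$ has real part $\to+\infty$ and imaginary part confined to a strip of width $<2\pi$ asymptotically, so consecutive curves, the line, and a far-right vertical segment bound a Jordan domain. I would make this precise using the explicit control on $f_a^{-1}$: if $w\in\sigma$ then the preimages of $w$ are $\log(w-a)+2\pi i k$, and as $\re w\to\infty$ we have $\re\log(w-a)\to\infty$ while $\im\log(w-a)\to 0$ (if $\Gamma$ is chosen to lie in, say, the right half-plane eventually), pinning each $\tilde\gamma_k$ near the horizontal line $\{\im z = 2\pi k\}$ for large real part. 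Once this geometric picture is nailed down, truncating $\M$ to a large disc $D(0,\rho)$ containing $z_0$ and taking the cell of $D(0,\rho)\setminus\M$ around $z_0$ gives the required bounded clopen set in $(\C\setminus(A_R(f_a)\cup F(f_a)))\cup\{z_0,\infty\}$, and the proof concludes by invoking Proposition~\ref{prop:attractingparabolic} for the existence and properties of $\Gamma$.
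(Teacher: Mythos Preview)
Your construction of $\mathcal{M}=\mathcal{H}\cup f_a^{-1}(\sigma)$ is exactly the one the paper uses, and you are right that $\mathcal{M}\subset F(f_a)$. But the central geometric claim in your step~(i) is false: the complementary components $S_j$ of $\mathcal{M}$ are \emph{not} bounded. Each preimage arc $\tilde\gamma_k$ starts on the line $\{\re z=-c\}$ and has $\re z\to+\infty$; between two consecutive such arcs and the vertical line you get a region with a left side, a top and a bottom, but no right side~--- an infinite half-strip. (Concretely: for $a<-1$ the hairs of $J(f_a)$ are unbounded connected subsets of $J(f_a)$, so $F(f_a)$ alone cannot separate a point on a hair from~$\infty$; yet your argument, if it worked, would prove exactly that, since you observe $\partial\mathcal{M}\subset F(f_a)$.) Your fallback of ``truncating $\mathcal{M}$ to a large disc $D(0,\rho)$'' does not rescue this: the cell containing $z_0$ then has part of the circle $\partial D(0,\rho)$ on its boundary, and there is no reason for that circle to lie in $A_R(f_a)\cup F(f_a)$.

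The missing idea is that one must \emph{iterate} and use the dynamics to close off the strips on the right; this is precisely where $A_R(f_a)$ enters and why it cannot be dispensed with. The paper defines, with $D_k=D(0,M^k(R,f_a))$,
\[
   X \;=\; \bigcap_{k\ge 0}\Bigl(f_a^{-k}(\C\setminus D_k)\;\cup\;\bigcup_{j=0}^{k-1} f_a^{-j}(\mathcal{M})\Bigr),
\]
so that $z\in X$ means: either $|f_a^k(z)|\ge M^k(R,f_a)$ for all $k$ (hence $z\in A_R(f_a)$), or the orbit of $z$ enters $\mathcal{M}\subset F(f_a)$ before falling behind that growth. One then shows that the component $V$ of $\C\setminus X$ containing $z_0$ is bounded. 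The point is that any $z\in V$ has its orbit confined to the strips $S_j$ meeting the discs $D_n$ until some first time $N$ when $|f_a^N(z)|<M^N(R,f_a)$; this gives the uniform bounds $|\im f_a^n(z)|\le F^n(R+1)$ and $\re f_a^n(z)>-c$ for $n<N$, and then the ``continued growth'' estimate (Corollary~\ref{cor:growth}) forces $\re z$ to be bounded above. Without this dynamical step there is no mechanism producing a right-hand wall, and your proposal does not yet contain one.
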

 \begin{proof}
%   The claim is trivial if $z_0\in F(f_a)$, so we can assume that $z_0\in J(f_a)$. 
%
    Let $U_1$ be the component of $F(f_a)$ containing $a$, and let $\eps = e^{-c}$ be small enough such that
      $D\defeq \overline{D(a,\eps)}\subset U_1$. 
     By  Proposition~\ref{prop:attractingparabolic}, there is an arc $\sigma\subset U_1$ connecting $D$ to $\infty$, 
     intersecting $D$ only in the finite endpoint, and 
     along which real parts tend to infinity. 

   Consider the closed set 
     \[ \mathcal{M} \defeq f_a^{-1}(D\cup \sigma). \] 
    (See Figure~\ref{subfig:M}.) 
    Then $\mathcal{M}$ consists of the closed left half-plane $\mathcal{L}\defeq\{z\colon \text{Re}(z)\leq -c\}$, together with countably many arcs connecting this half-plane to infinity. Each of these
    arcs is a component of $f_a^{-1}(\sigma)$, and hence they are all $2\pi i \Z$-translates of each other. Furthermore, 
     each of these preimage components is bounded in the imaginary direction 
    (since the argument is bounded along $\sigma$). 

   Let $(S_j)_{j=-\infty}^{\infty}$ denote the complementary components of $\mathcal{M}$, 
       %%labelled such that $S_{j+1} = S_j + 2\pi i$, and set
     and set 
     \[ \delta \defeq \sup_{z,w\in S_0}{\lvert \im z - \im w\rvert} = \sup_{z,w\in S_j}{\lvert \im z - \im w\rvert}
             \text{ for all  $j\in\Z$}. \]

  Now let $R>0.$ Note that it follows from (\ref{eqn:Rf_a}) that $M(r,f_a) >r$, for $r >0$. Moreover, for $0<R'<R$ we have that $A_R(f_a) \cup F(f_a) \subset A_{R'}(f_a)\cup F(f_a)$. Hence we can increase $R$, if necessary, to ensure that 
            \[ R>\max\bigl(\lvert z_0\rvert ,c, 3, \ln(1 + 2(\lvert a \rvert + \delta))\bigr). \] 
   Set $D_k \defeq D(0,M^k(R,f_a))$ for $k\geq 0$, and consider the set 
  \[ X \defeq \bigcap_{k\geq 0} \left( f_a^{-k}\left(\C\setminus D_k\right) \cup \bigcup_{j=0}^{k-1} f_a^{-j}(\mathcal{M})\right).\]
   That is, if  $x\in X$ and $k\geq 0$, then either
     $\lvert f_a^k(z)\rvert \geq M^k(R,f_a)$, or the orbit of $z$ has entered $\mathcal{M}$ before time $k$. 
Note that since $|z_0|<R$ and $X \subset \C \setminus D(0,R)$ we have $z_0\notin X$. Also 
    $X\subset A_R(f_a)\cup F(f_a)$ by definition. It is thus enough to show that 
     $z_0$ is separated from $\infty$ by $X$. Since $X$ is closed as an intersection of closed sets,
     this is equivalent to showing that the connected component $V$ of $\C\setminus X$ containing 
     $z_0$ is bounded. 

   By the definition of $X$, the modulus of the forward images of any point in $V$ 
     must fall behind
     the growth given by $M^k(R,f_a)$ in order to be able to enter $\M$; i.e. 
    \[ V \cap f_a^{-n}(\M) \subset (\C\setminus X) \cap f_a^{-n}(\M) \subset 
       \bigcup_{k=0}^n f_a^{-k}(D_k) \]
   for all $n\geq 0$.
   Since $f_a(D_k) \subset D_{k+1}$ by definition, we have 
     \begin{equation}\label{eqn:imagesofV}
        f_a^n(V) \cap \M \subset D_n 
      \end{equation}
 for all $n\geq 0$. (See Figure~\ref{fig:proof}.) 

 \begin{figure}
    \def\svgwidth{\textwidth}
     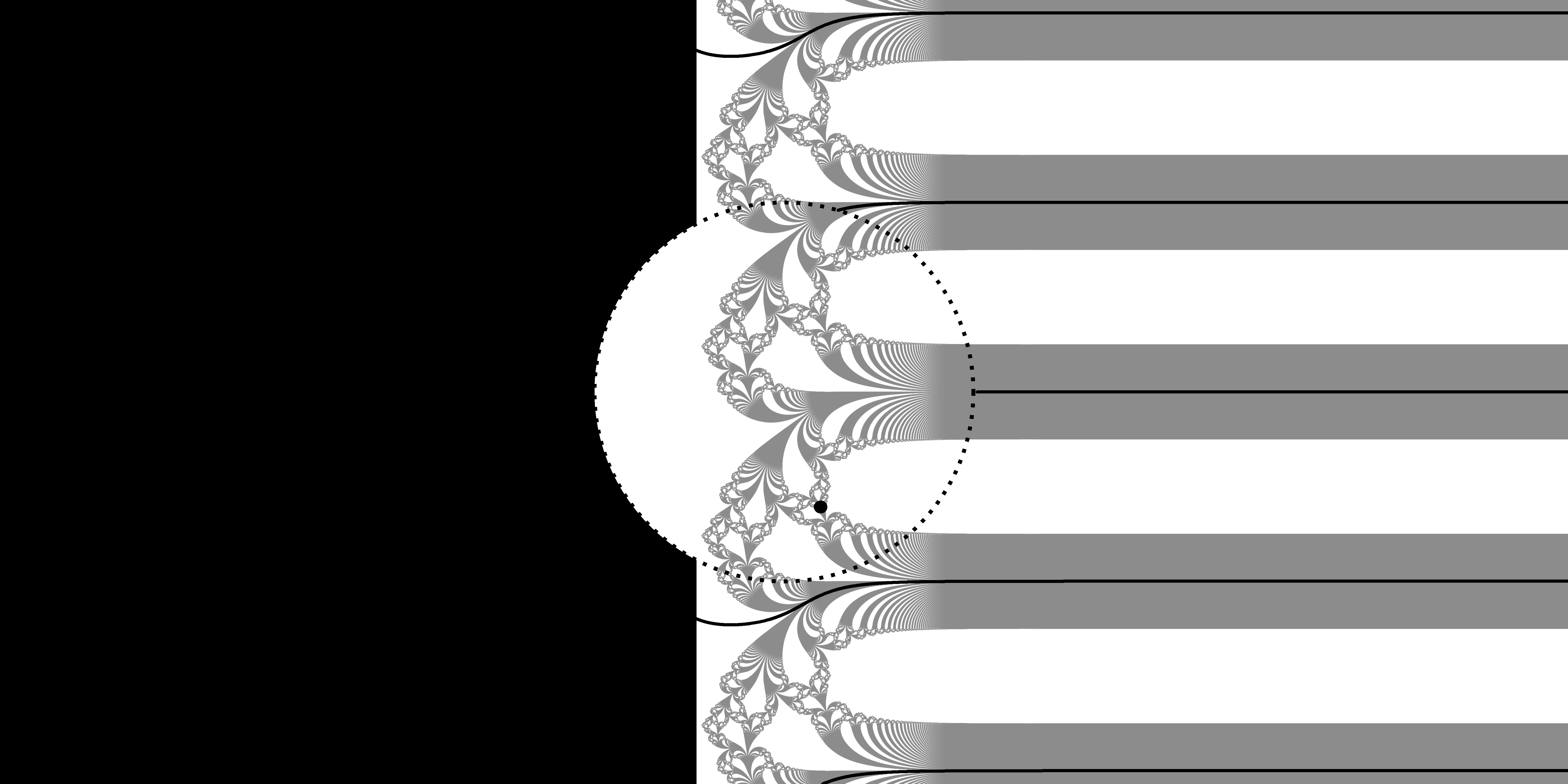
    \caption{\label{fig:proof}Illustration of the proof of Theorem~\ref{thm:fastseparation}. The domain $f_a^n(V)$ cannot intersect the set shown in black,
       which is the part of  $\M$ from Figure~\ref{subfig:M} that does not lie in the disc $D_n = D(0,M^n(f_a,R))$. (The boundary of this disc is shown as a dotted line.) 
       Since $f_a^n(z_0)\in D_n\cap f_a^n(V)$,
       any strip $S_j$ which does not intersect $D_n$ also cannot intersect $f_a^n(V)$.} 
 \end{figure}
 
  Let $n\geq 0$. Since $\lvert z_0\rvert < R$, we have $f_a^n(z_0) \in f_a^n(V)\cap D_n$. 
    If $S_j$ is a complementary
     component of $\M$ which does not intersect $D_n$, then $f^n(z_0)\in f_a^n(V)\setminus S_j$, and
    furthermore $f_a^n(V)\cap \partial S_j = f_a^n(V)\cap \M\cap \overline{S_j} =  \emptyset$ by~\eqref{eqn:imagesofV}. Hence 
    $S_j\cap f_a^n(V)=\emptyset$ (see Figure~\ref{fig:proof}). Thus~\eqref{eqn:imagesofV} can be
     reformulated as
\begin{equation}\label{eqn:imagesofV2}
   f_a^n(V) \subset D_n \cup \bigcup\bigl\{ S_j\colon S_j\cap D_n \neq\emptyset\bigr\}. \end{equation}

  Now let $z\in V$. Then there is a minimal $N\geq 0$ 
     such that $f_a^{N}(z) \in D_{N}$. 
     By~\eqref{eqn:imagesofV} we have 
      $f_a^n(z)\notin \M$ for $n < N$, and hence 
      \begin{equation}\label{eqn:negativerealpartsinV} \re f_a^n(z) > -c > -R > - F^n(R+1). \end{equation}
     Moreover, using Lemma~\ref{lem:exponentialgrowth} and choice of $R$, we see 
       from~\eqref{eqn:imagesofV2} that      
   \begin{equation}\label{eqn:imbound}
      \left\lvert \im f_a^n(z) \right\rvert \leq M^n(R,f_a) + \delta \leq F^n(R+1)
     \end{equation}
   for all $n$. Hence
     \begin{equation} \label{eqn:upperrealbound}
       \re z \leq \max(R+3,K),
     \end{equation}  
   where $K$ is as in Corollary~\ref{cor:growth}, applied to  $\mu = R+1$. 
    Indeed, otherwise we could conclude from Corollary~\ref{cor:growth} that 
     \[ \lvert f_a^{N+1}(z)\rvert \geq F^{N+1}(\re z -2 ) > F^{N+1}(R+1), \]
    which contradicts the fact that 
    $\lvert f_a^{N+1}(z)\rvert < M^{N+1}(R,f_a) < F^{n+1}(R+1)$ by choice of $N$ 
     and Lemma~\ref{lem:exponentialgrowth}.

   In conclusion, $\im z$ is bounded in $V$ by~\eqref{eqn:imbound}, while $\re z$ is bounded from below
      by~\eqref{eqn:negativerealpartsinV} and from above by~\eqref{eqn:upperrealbound}. So $V$ is bounded, as required and so for any $R>0$ we can separate $z_0$ from infinity by $A_R(f_a) \cup F(f_a)$.
 \end{proof} 
Theorem \ref{thm:fastseparation} implies that $A_R(f_a)\cup F(f_a)$ has the structure of a spider's web.
\begin{cor}[Spiders' webs]\label{cor:fastsw}
  Let $f_a(z) = e^z+a$, where $a\in F(f_a)$. Then, for all $R>0$,  $A_R(f_a)\cup~F(f_a)$ is a spider's web.
\end{cor}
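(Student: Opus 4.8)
The plan is to derive the corollary from Theorem~\ref{thm:fastseparation} together with the characterisation of spiders' webs in Theorem~\ref{thm:spiderswebsprecise}. Fix $R>0$ and put $E\defeq A_R(f_a)\cup F(f_a)$. By Theorem~\ref{thm:fastseparation}, every finite point $z_0\in\C$ is separated from $\infty$ by $E$; that is, $E$ satisfies condition~\ref{item:pointseparation} of Theorem~\ref{thm:spiderswebsprecise}. By the equivalence of conditions~\ref{item:spidersweb}--\ref{item:pointseparation} there, it follows that there is a sequence of bounded simply connected domains $G_n$ with $G_n\subset G_{n+1}$, $\partial G_n\subset E$ and $\bigcup_n G_n=\C$. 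Hence it only remains to prove that $E$ is connected; by the second part of Theorem~\ref{thm:spiderswebsprecise} this is equivalent to condition~\ref{item:unboundedcomponents} (that $E$ contains unbounded connected subsets with dense union) and to condition~\ref{item:connectedwithinfinity} (that $E\cup\{\infty\}$ is connected).

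To verify this I would use the components of the Fatou set. Since $a\in F(f_a)$ we have $F(f_a)\neq\emptyset$, so $J(f_a)\neq\C$, and therefore $F(f_a)$ is open and dense in $\C$. Moreover, every component of $F(f_a)$ is unbounded: the component $U_1$ containing $a$ is unbounded by Proposition~\ref{prop:attractingparabolic}; every Fatou component is an immediate basin component on the cycle of $U_1$ or an iterated preimage of one; and a component $V$ of $f_a^{-1}(W)$, for an unbounded component $W$, is again unbounded because $f_a$ maps $V$ onto $W\setminus\{a\}$ (being a covering over that set), while a continuous image of a bounded set is bounded. Thus the components of $F(f_a)$ are unbounded connected subsets of $E$ whose union $F(f_a)$ is dense in $\C$, hence in $E$; this is condition~\ref{item:unboundedcomponents}. (Equivalently: each such component $U$ yields a connected set $U\cup\{\infty\}$, as it lies between the connected set $U$ and its closure in $\Ch$, so $F(f_a)\cup\{\infty\}$ is connected; and then $E\cup\{\infty\}$, lying between $F(f_a)\cup\{\infty\}$ and its closure $\Ch$, is connected, which is condition~\ref{item:connectedwithinfinity}.) By Theorem~\ref{thm:spiderswebsprecise}, $E=A_R(f_a)\cup F(f_a)$ is a spider's web, as claimed.

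Since the genuinely substantive input --- the separation estimate of Theorem~\ref{thm:fastseparation} --- is already in hand, there is no real obstacle remaining. The only step calling for a (routine) argument is the unboundedness of all Fatou components of $f_a$, which is standard and follows from the unboundedness of $U_1$ by pulling back along $f_a$ and using that $f_a$ is a covering map onto $\C\setminus\{a\}$.
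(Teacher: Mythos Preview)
Your proof is correct and follows essentially the same approach as the paper: apply Theorem~\ref{thm:fastseparation} to obtain condition~\ref{item:pointseparation} of Theorem~\ref{thm:spiderswebsprecise}, then use the second part of that theorem together with the fact that the Fatou components form a dense family of unbounded connected subsets of $E$. The only minor difference is in how unboundedness of all Fatou components is justified: the paper appeals to Proposition~\ref{prop:attractingparabolic} (implicitly using its proof to get arcs to infinity in \emph{every} component, not just $U_1$), whereas you argue directly via the covering property of $f_a$ over $\C\setminus\{a\}$, pulling back inductively from $U_1$; both are valid and equally short.
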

\begin{proof}
Let $E\defeq A_R(f_a)\cup F(f_a)$. By 
   Theorem~\ref{thm:fastseparation}, property~\ref{item:pointseparation} in
   Theorem~\ref{thm:spiderswebsprecise} is satisfied for $E$.
   Since 
   $J(f_a)$ is nowhere dense, $F(f_a)$ is dense in $E$. Moreover, it follows from 
  Proposition~\ref{prop:attractingparabolic} that every connected component of $F(f_a)$ contains an arc along which the real part tends to infinity, and so is unbounded. Hence it follows from the second part of 
   Theorem~\ref{thm:spiderswebsprecise} that $E$ is a spider's web. 
\end{proof}
We can also deduce Theorem \ref{thm:main}, in the following more precise version. 
\begin{cor}[Total separation]\label{cor:totsep}
  Let $f_a(z) = e^z+a$, where $a \in F(f_a)$. Then the set 
    $(E(f_a)\setminus A_R(f_a))\cup\{\infty\}$ is totally separated for all $R\geq 0$.

    In particular, $\meandering(f_a)\cup \{\infty\}$ is totally separated. 
\end{cor}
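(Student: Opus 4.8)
The plan is to derive Corollary~\ref{cor:totsep} from Theorem~\ref{thm:fastseparation} together with the abstract separation criterion of Lemma~\ref{lem:totseparatedCh}, the only external ingredient being the known fact that the endpoint set $E(f_a)$ is totally separated as a subspace of $\C$ (Mayer~\cite{mayer90} for $a\in(-\infty,-1)$, and \cite[Theorem~1.3]{alhabib-rempe15} for general $a\in F(f_a)$). Throughout I shall use the elementary observation that a subspace of a totally separated space is itself totally separated.

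First I would fix $R>0$ and set $E\defeq A_R(f_a)\cup F(f_a)$, $A\defeq E(f_a)\setminus A_R(f_a)$, and $X\defeq A\cup\{\infty\}\subset\Ch$, so that $X$ is exactly the space whose total separation is claimed. The key preliminary point is that $A$ is disjoint from $E$: indeed $A\subset E(f_a)\subset J(f_a)$ is disjoint from $F(f_a)$, and $A$ is disjoint from $A_R(f_a)$ by construction. Hence $A\subset\C\setminus E$, so that $X$ is a subspace of $(\C\setminus E)\cup\{\infty\}$. Next, $A$ is totally separated, being a subspace of $E(f_a)$. Finally, every $z_0\in A$ is separated from $\infty$ in $X$: by Theorem~\ref{thm:fastseparation} there is a clopen subset $U$ of $(\C\setminus E)\cup\{z_0,\infty\}$ with $z_0\in U$ and $\infty\notin U$; since $z_0\in\C\setminus E$ we have $(\C\setminus E)\cup\{z_0,\infty\}=(\C\setminus E)\cup\{\infty\}\supset X$, and therefore $U\cap X$ is a clopen subset of $X$ containing $z_0$ but not $\infty$. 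Lemma~\ref{lem:totseparatedCh}, applied with $x=\infty$, then yields that $X=(E(f_a)\setminus A_R(f_a))\cup\{\infty\}$ is totally separated for every $R>0$.

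The remaining assertions follow by passing to subspaces. For $R=0$, the function $r\mapsto M(r,f_a)$ is nondecreasing, so $M^n(0,f_a)\le M^n(R,f_a)$ for all $n$ and all $R>0$; hence $A_R(f_a)\subset A_0(f_a)$, and $(E(f_a)\setminus A_0(f_a))\cup\{\infty\}$ is a subspace of the already-treated set $(E(f_a)\setminus A_R(f_a))\cup\{\infty\}$. For the concluding statement, recall that $\meandering(f_a)=J(f_a)\setminus A(f_a)$ and that, when $a\in F(f_a)$, parts (b) and (c) of Proposition~\ref{prop:hairs} give that every point of $J(f_a)$ lies on a hair or is an endpoint while every point on a hair belongs to $A(f_a)$; consequently $\meandering(f_a)\subset E(f_a)$. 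Since in addition $A_R(f_a)\subset A(f_a)$, the set $\meandering(f_a)$ is disjoint from $A_R(f_a)$, so $\meandering(f_a)\subset E(f_a)\setminus A_R(f_a)$ and $\meandering(f_a)\cup\{\infty\}$ is a subspace of a totally separated set. I do not expect any real obstacle here: the substance lies in Theorem~\ref{thm:fastseparation}, which is already proved, and in the cited total separation of $E(f_a)$; the one point that needs genuine care is checking that the endpoints in $E(f_a)\setminus A_R(f_a)$ really lie in the complement of the spider's web $E$, since this is precisely what lets the clopen set produced by Theorem~\ref{thm:fastseparation} be cut down to a clopen subset of $X$.
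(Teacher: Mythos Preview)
Your proposal is correct and follows essentially the same route as the paper: invoke the known total separation of $E(f_a)$ (the paper cites \cite[Theorem~1.7]{alhabib-rempe15} rather than Theorem~1.3), apply Theorem~\ref{thm:fastseparation} and Lemma~\ref{lem:totseparatedCh} to get the first claim, and then use Proposition~\ref{prop:hairs} to see that $\meandering(f_a)\subset E(f_a)\setminus A_R(f_a)$. Your explicit handling of $R=0$ by passing to a subspace is a detail the paper leaves implicit, but the argument is the same.
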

\begin{proof}
  The set $E(f_a)\subset J(f_a)$ 
     is totally separated by \cite[Theorem 1.7]{alhabib-rempe15}. Hence the first claim follows from
 Theorem~\ref{thm:fastseparation} and Lemma~\ref{lem:totseparatedCh}. 

   By Proposition~\ref{prop:hairs}, the set $A(f_a)$ contains all points on hairs
     (as well as some endpoints). Hence
    \[ \meandering(f_a) = J(f_a) \setminus A(f_a)=E(f_a)\setminus A(f_a) \subset E(f_a) \setminus A_R(f_a), \]
     and thus $\meandering(f_a)\cup\{\infty\}$ is totally
     separated by the first claim. 
\end{proof}
\begin{rmk}
  On the other hand, it follows from
    the construction in \cite[Remark 4.6]{alhabib-rempe15} that 
    the connected component of $\infty$ in $(E(f_a)\cap A_R(f_a))\cup\{\infty\}$ is nontrivial for all $R$. 
\end{rmk}

\section{Exponential maps whose singular value lies in the Julia set} \label{sec:julia}
In this section we remark upon the case where $a \in J(f_a)$. In this case, $F(f_a)$ is either empty or
  consists of a cycle of Siegel discs, together with their preimages.
  We can still apply our method of proof from the previous section to obtain the following result. 

  \begin{theorem}[Points staying away from the singular value]\label{thm:avoidsingularvalue}
   Let $a\in\C$ with  $a\in J(f_a)$, let $\eps>0$ and $R>0$. Let $S$ denote the set of points $z\in \C$ with 
    \begin{equation}\label{eqn:avoidsingularvalue} \inf_{n\geq 0} |f_a^n(z) - a| > \eps.\end{equation}
    Then every point $z_0\in S$ is separated from infinity by $A_R(f_a)\cup (\C\setminus S)$. 
  \end{theorem}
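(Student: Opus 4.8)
The plan is to mimic the proof of Theorem~\ref{thm:fastseparation}, replacing the Fatou component $U_1$ (which is no longer available in the required form when $a\in J(f_a)$) by a small disc $D\defeq \overline{D(a,\eps')}$ for a suitable $\eps'<\eps$, and replacing the role of $A_R(f_a)\cup F(f_a)$ by $A_R(f_a)\cup(\C\setminus S)$. The key observation is that in Theorem~\ref{thm:fastseparation} the set $\sigma$ was used only to produce a closed set $\M\defeq f_a^{-1}(D\cup\sigma)$ that (i) contains a left half-plane, (ii) has complementary components $S_j$ that are uniformly bounded in the imaginary direction, and (iii) has the property that every orbit entering $D$ must first pass through $\M$. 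When $a\in J(f_a)$ we cannot connect $D$ to infinity inside a Fatou component, but we do not need to: we can simply take $\M\defeq f_a^{-1}(D)$, which already consists of a closed left half-plane $\{\Re z\le -c\}$ together with countably many $2\pi i\Z$-translates of a single bounded-in-imaginary-part ``finger'' (the preimage of the small disc $D$), so properties (i)--(iii) all hold with $D\cup\sigma$ replaced by $D$ alone, and the supremum $\delta$ of imaginary oscillation over a complementary strip $S_0$ is still finite.

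With $\M$ in hand, the argument proceeds verbatim as before. First I would enlarge $R$ (using that $M(r,f_a)>r$ for all $r>0$ by~\eqref{eqn:Rf_a}, and that $A_R\cup(\C\setminus S)$ only grows as $R$ decreases, so enlarging $R$ is harmless) so that $R>\max(|z_0|,c,3,\ln(1+2(|a|+\delta)))$. Then I define $D_k\defeq D(0,M^k(R,f_a))$ and the closed set
\[ X \defeq \bigcap_{k\ge 0}\Bigl( f_a^{-k}(\C\setminus D_k) \cup \bigcup_{j=0}^{k-1} f_a^{-j}(\M) \Bigr), \]
and I claim $X\subset A_R(f_a)\cup(\C\setminus S)$: a point $x\in X$ either satisfies $|f_a^k(x)|\ge M^k(R,f_a)$ for all $k$, hence $x\in A_R(f_a)$, or its orbit enters $\M=f_a^{-1}(D)$ at some time, hence its orbit hits $D$, which since $\eps'<\eps$ forces $\inf_n|f_a^n(x)-a|\le\eps'<\eps$, i.e. $x\in\C\setminus S$. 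Since $|z_0|<R$ we have $z_0\notin X$, so it suffices to show the component $V$ of $\C\setminus X$ containing $z_0$ is bounded. The estimates~\eqref{eqn:imagesofV}--\eqref{eqn:upperrealbound} then apply unchanged: for $z\in V$ with first entry time $N$ into $D_N$, the orbit avoids $\M$ up to time $N$ so $\Re f_a^n(z)>-c$, the strip argument bounds $|\Im f_a^n(z)|\le M^n(R,f_a)+\delta$, and Corollary~\ref{cor:growth} applied with $\mu=R+1$ bounds $\Re z$ from above; hence $V$ is bounded.

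The only genuinely new point to check, and the place where I expect the mild subtlety to lie, is the precise structure of $\M=f_a^{-1}(D)$ and the finiteness of $\delta$: one must verify that $f_a^{-1}(\overline{D(a,\eps')})$ really is a disjoint union of a left half-plane and countably many vertically-bounded fingers, with the strips $S_j$ between them also vertically bounded by a common constant $\delta$. This is immediate from the fact that $f_a(z)=e^z+a$, so $f_a^{-1}(\overline{D(a,\eps')})=\{z:|e^z|\le\eps'\}=\{z:\Re z\le\log\eps'\}$ is exactly a closed left half-plane --- wait, that is all of the preimage, so in fact $\M$ is simply the half-plane $\mathcal L\defeq\{z:\Re z\le -c\}$ with $c\defeq-\log\eps'$, there are no fingers at all, and there is a single complementary component $S_0=\{z:\Re z>-c\}$, which is a right half-plane and hence \emph{not} vertically bounded. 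To remedy this I would instead take $D$ to be the preimage setup from Theorem~\ref{thm:fastseparation} in spirit but note we must separate using $\C\setminus S$; the correct fix is to choose, as in the disjoint-type picture, $\M\defeq f_a^{-1}(\widetilde D)$ where $\widetilde D$ is a slit disc or, more simply, to keep $\sigma$ as \emph{any} arc from $D$ to $\infty$ along which $\Re\to+\infty$ and along which $\arg(z-a)$ stays bounded --- such an arc exists trivially (e.g. a horizontal ray $[a+\eps',+\infty)$), it need not lie in a Fatou component for this argument, since the only property of $\sigma$ actually used is that $f_a^{-1}(\sigma)$ consists of vertically-bounded arcs, which holds because $\arg$ is bounded along $\sigma$. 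With this choice $\M=f_a^{-1}(D\cup\sigma)$ again has the three required properties, $\delta<\infty$, and the proof goes through exactly as in Theorem~\ref{thm:fastseparation}. The main obstacle is thus purely bookkeeping: ensuring $\sigma$ can be chosen with bounded argument and disjoint from $D$ except at its endpoint, and that $\eps'$ is taken small enough (say $\eps'<\eps$) that entering $D$ witnesses failure of~\eqref{eqn:avoidsingularvalue}.
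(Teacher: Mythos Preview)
Your overall strategy is correct, and you correctly diagnosed why $\M=f_a^{-1}(D)$ alone fails (it is just a half-plane, so the complementary region is a single vertically unbounded strip). However, your proposed fix has a genuine gap.

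You assert that ``the only property of $\sigma$ actually used is that $f_a^{-1}(\sigma)$ consists of vertically-bounded arcs''. This is false. In Theorem~\ref{thm:fastseparation} the arc $\sigma$ lies in the Fatou set, and this is precisely what makes the inclusion $X\subset A_R(f_a)\cup F(f_a)$ work: a point of $X$ not in $A_R(f_a)$ has its orbit entering $\M=f_a^{-1}(D\cup\sigma)$, hence landing in $D\cup\sigma\subset F(f_a)$. In the present setting the analogous inclusion you need is $X\subset A_R(f_a)\cup(\C\setminus S)$, and your justification for it (``its orbit enters $\M=f_a^{-1}(D)$ \dots\ hence its orbit hits $D$'') was written for your first attempt and no longer applies once you enlarge $\M$ to $f_a^{-1}(D\cup\sigma)$. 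With $\sigma=[a+\eps',+\infty)$ a horizontal ray, a point $x\in X\setminus A_R(f_a)$ may have $f_a^{j+1}(x)\in\sigma$ rather than $D$, and there is no reason the forward orbit of a point on this ray should ever come within $\eps$ of $a$; so $x$ may lie in $S$, and $X\not\subset A_R(f_a)\cup(\C\setminus S)$. The bounded set $V$ you construct is then a component of the complement of the wrong set.

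The paper repairs this by choosing $\sigma$ so that it \emph{does} eventually map into $D$. Since $a\in J(f_a)$, the disc $D$ meets the Julia set, and the blowing-up property gives $\zeta\in D$ and $n\ge 1$ with $f_a^n(\zeta)$ in the left half-plane $\mathcal L=\{\Re z\le -c\}$. One connects $f_a^n(\zeta)$ to $\infty$ by an arc $\sigma_0\subset\mathcal L$ avoiding the finitely many points $f_a^k(a)$, $0\le k<n$, and pulls back: the component $\sigma_1$ of $f_a^{-n}(\sigma_0)$ through $\zeta$ is an arc to $\infty$ with $\Re\to+\infty$, and every point on it satisfies $f_a^n(\cdot)\in\mathcal L$, hence $f_a^{n+1}(\cdot)\in D$, so $\sigma_1\subset\C\setminus S$. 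Taking $\sigma$ to be the tail of $\sigma_1$ outside $D$, one now has $D\cup\sigma\subset\C\setminus S$ (indeed, every point of $D\cup\sigma$ eventually lands in $D$), and therefore $\M\subset\C\setminus S$ and $X\subset A_R(f_a)\cup(\C\setminus S)$ as required. The remainder of your argument then goes through.
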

 \begin{remark}
   If $a\in F(f_a)$, then there exists $\eps >0$ such that every point in $J(f_a)$ trivially satisfies~\ref{eqn:avoidsingularvalue}. Hence, by
     Theorem~\ref{thm:fastseparation}, 
      we can remove the hypothesis ``$a\in J(f_a)$'' in Theorem~\ref{thm:avoidsingularvalue}
     if we replace ``$z\in\C$'' by ``$z\in J(f_a)$'' (or, even, ``$z$ not belonging to an attracting or parabolic basin'').
 \end{remark}
  \begin{proof}
    Write $\eps = e^{-c}$ and $D \defeq \overline{D(a,\eps)}$. Since $D$ intersects the Julia set, it follows by the blowing-up property of the Julia set (see e.g.\ \cite[Section~2]{Berg}, \cite[Lemma~2.1]{rippon-stallard09}) that there is 
       a point $\zeta \in  D$ and  $n\geq 1$ such that $f_a^n(\zeta)\in \mathcal{L}=\{z\colon \text{Re}(z)\leq -c\}$.
       Now connect $f_a^n(\zeta)$ to infinity by an arc $\sigma_0$ in $\mathcal{L}$, chosen such that
       $\sigma_0$ avoids $f_a^k(a)$ for $k=0,\dots,n-1$. As in Proposition~\ref{prop:attractingparabolic}, 
         the connected component $\sigma_1$ of
       $f_a^{-n}(\sigma_0)$ containing $\zeta$ is an arc in $\C\setminus S$ whose real parts tend to $+\infty$.
       By deleting the maximal piece of $\sigma_1$ connecting $\zeta$ to $\partial D$, we obtain an
     arc $\sigma$ connecting $D$ to infinity. 
       Set $\mathcal{M}\defeq f_a^{-1}(D\cup \sigma)$, and continue as in the proof of Theorem~\ref{thm:fastseparation}. 

     So we again obtain a closed set $X$, and see that the connected component $V$ of $\C\setminus X$ containing
      $z_0$ is bounded. All points in the set $X$ either belong to  $A_R(f_a)$, or otherwise
      map into the left half-plane $\mathcal{L}=\{z: \text{Re}(z)\leq -c\}$, and thus into $D$. So
      $X\subset A(f_a) \cup (\C\setminus S)$, and we obtain the desired conclusion. 
  \end{proof}

 In the case where the Fatou set is non-empty,
   Theorem~\ref{thm:avoidsingularvalue} immediately implies the following result,
   proved in \cite{rempe-04}.

   \begin{theorem}\label{thm:siegel}
     Let $f_a(z)= e^z+a$, and assume that $f_a$ has a cycle  $U_1 \mapsto \dots \mapsto U_n \mapsto U_1$ ($n\geq 1$) of Siegel discs such that no
       $\partial U_j$ contains the singular value $a$.  

      Then all $U_j$ are bounded. 
   \end{theorem}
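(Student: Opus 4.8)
The plan is to derive the boundedness of the Siegel discs $U_j$ directly from Theorem~\ref{thm:avoidsingularvalue}. Fix $\eps>0$ small enough that the $\eps$-neighbourhood $D(a,\eps)$ of the singular value meets none of the boundaries $\partial U_1,\dots,\partial U_n$; this is possible by the standing hypothesis together with compactness of $\bigcup_j \partial U_j$. Fix also any $R>0$, and let $S$ be the set of points $z\in\C$ satisfying~\eqref{eqn:avoidsingularvalue}, i.e.\ $\inf_{n\ge 0}|f_a^n(z)-a|>\eps$.

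The first key observation is that $\bigcup_j U_j \subset S$. Indeed, the cycle of Siegel discs is forward-invariant, so the orbit of any $z\in U_j$ stays in $\bigcup_j U_j$; since each $U_j$ is an open topological disc on which $f_a$ is conjugate to an irrational rotation, the closure of each orbit is either a point (the centre) or an invariant analytic circle, and in all cases it is a compact subset of $\bigcup_j U_j$, which is disjoint from the closed set $\overline{D(a,\eps)}$ by the choice of $\eps$ (recall $a\notin\bigcup_j\overline{U_j}$, as $a\in J(f_a)$). Hence the orbit of $z$ keeps a positive distance $>\eps$ from $a$, so $z\in S$.

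The second step applies Theorem~\ref{thm:avoidsingularvalue}: every point $z_0\in S$ is separated from $\infty$ by the closed set $A_R(f_a)\cup(\C\setminus S)$. Now suppose for contradiction that some $U_{j_0}$ is unbounded. Since $U_{j_0}\subset S$ and $U_{j_0}$ is connected, all of $U_{j_0}$ lies in a single connected component $W$ of $\C\setminus\bigl(A_R(f_a)\cup(\C\setminus S)\bigr)$ — here we use that $U_{j_0}$, being a Fatou component, is disjoint from $A_R(f_a)\subset J(f_a)$, and is contained in $S$, hence avoids $A_R(f_a)\cup(\C\setminus S)$ entirely. But the separation statement means exactly that the component $W$ containing any $z_0\in U_{j_0}$ is bounded (a clopen subset of $(\C\setminus E)\cup\{z_0,\infty\}$ containing $z_0$ but not $\infty$ corresponds, just as in the proof of Theorem~\ref{thm:fastseparation}, to a bounded component of the complement of the separating set). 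This contradicts $U_{j_0}\subset W$ being unbounded, so every $U_j$ is bounded.

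The main obstacle, and the only point requiring genuine care, is the verification that $\bigcup_j U_j\subset S$ — specifically, controlling the closures of orbits in a Siegel disc and confirming they stay uniformly away from $a$. This hinges on the hypothesis that $a\notin\partial U_j$ for all $j$: one must note that the $\omega$-limit set of a point in a Siegel disc is contained in the disc itself (it cannot reach $\partial U_j$), so it is a compact subset of the open set $\bigcup_j U_j$, and since $a\in J(f_a)$ lies outside all $\overline{U_j}$, shrinking $\eps$ guarantees the required uniform gap. Everything else is a direct invocation of Theorem~\ref{thm:avoidsingularvalue} and the correspondence between separation from $\infty$ and boundedness of complementary components already used in Section~\ref{section_exp}.
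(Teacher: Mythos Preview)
Your approach is exactly the paper's: the paper simply states that Theorem~\ref{thm:avoidsingularvalue} ``immediately implies'' Theorem~\ref{thm:siegel}, and you have spelled out precisely how. The core of your argument---that the Siegel cycle lies in $S$ and is disjoint from $A_R(f_a)$, hence sits in a bounded complementary component of the separating set---is correct.

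There is, however, one slip worth fixing. In your first paragraph you justify the choice of $\eps$ by invoking ``compactness of $\bigcup_j \partial U_j$''. That is circular: boundedness (hence compactness) of the $\partial U_j$ is exactly what the theorem asserts, so you cannot assume it at the outset. Fortunately compactness is unnecessary here. The set $\bigcup_j \partial U_j$ is a finite union of closed sets, hence closed, and by hypothesis it does not contain $a$; therefore $\operatorname{dist}\bigl(a,\bigcup_j\partial U_j\bigr)>0$, which is all you need. (In fact, since also $a\in J(f_a)$ gives $a\notin U_j$, a connectedness argument shows $D(a,\eps)\cap\overline{U_j}=\emptyset$ for each $j$; shrinking $\eps$ slightly then gives the disjointness from $\overline{D(a,\eps)}$ that you use later.) With this correction the proof goes through.
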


  (The proof of Theorem~\ref{thm:siegel} in \cite{rempe-04} also relies on using a similar set as $\mathcal{M}$ 
    above; compare \cite[Figure~2]{rempe-04}.)

 To conclude the section, note that
   it follows immediately from Proposition~\ref{prop:hairs}~\ref{item:nonlanding2} 
  that, when $a\in J(f_a)$,  the structure of the full set of non-escaping points in $J(f_a)$ can look
  very different from the case where $a\in  F(f_a)$. 

  \begin{cor}[Large meandering sets]\label{cor:largemeandering}
    Let $a\in\C$ be a parameter such that, for the exponential map $f_a$, the 
     omitted value $a$ is either an endpoint or on a hair. Then $ (J(f_a)\setminus I(f_a))\cup \{\infty\}$ is connected.  
     In particular, neither $I(f_a)$ nor $A(f_a)$ is a spider's web. 
  \end{cor}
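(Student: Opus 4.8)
The plan is to invoke Proposition~\ref{prop:hairs}~\ref{item:nonlanding2} directly: since the omitted value $a$ is either an endpoint or on a hair, that result tells us that $J(f_a)\setminus I(f_a)$ contains a dense collection of unbounded connected subsets. Here ``dense'' should be understood as dense in $J(f_a)$, which equals $\C$ in the cases at hand (by Proposition~\ref{prop:hairs}~\ref{item:nonlanding2}, the unbounded connected sets are dense in the closure of each preimage of $\hat\gamma$, and iterated preimages are dense in $J(f_a)$; also note $J(f_a)=\C$ whenever $a\in J(f_a)$). So the key structural input is: $E\defeq J(f_a)\setminus I(f_a)$ has a dense collection of unbounded connected subsets.

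Next I would apply the second part of Theorem~\ref{thm:spiderswebsprecise}, specifically the implication \ref{item:unboundedcomponents}~$\Rightarrow$~\ref{item:connectedwithinfinity}, which the theorem states holds \emph{for all unbounded sets $E$} (not only for those satisfying the spider's-web separation condition). Taking the union of a dense collection of unbounded connected subsets of $E$ and adjoining $\infty$ yields a connected subset of $E\cup\{\infty\}$ (each unbounded connected set, together with $\infty$, is connected in $\Ch$, and these all share the point $\infty$); since this union is dense in $E$, its closure in $E\cup\{\infty\}$ is all of $E\cup\{\infty\}$, and the closure of a connected set is connected. Hence $(J(f_a)\setminus I(f_a))\cup\{\infty\}$ is connected.

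Finally, for the last sentence: if $I(f_a)$ were a spider's web, then in particular it would be connected and would separate every finite point from $\infty$ (by Theorem~\ref{thm:spiderswebsprecise}), so $I(f_a)$ would separate any point of $J(f_a)\setminus I(f_a)$ from $\infty$ in $\Ch$, contradicting the connectedness of $(J(f_a)\setminus I(f_a))\cup\{\infty\}$ just established (a connected set containing $z$ and $\infty$ but disjoint from $I(f_a)$ witnesses non-separation). The same argument applies to $A(f_a)$, since $A(f_a)\subset I(f_a)$ and hence $J(f_a)\setminus I(f_a)\subset \C\setminus A(f_a)$, so the connected set $(J(f_a)\setminus I(f_a))\cup\{\infty\}$ still lies in $(\C\setminus A(f_a))\cup\{\infty\}$ and obstructs $A(f_a)$ from being a spider's web.

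I do not expect a genuine obstacle here — the corollary is essentially a bookkeeping consequence of Proposition~\ref{prop:hairs} and Theorem~\ref{thm:spiderswebsprecise}. The one point requiring mild care is making precise the density claim (density in $J(f_a)=\C$) and checking that the union-plus-$\infty$ argument is legitimate in $\Ch$; but this is exactly the content of the ``for all unbounded sets'' implication already recorded in Theorem~\ref{thm:spiderswebsprecise}, so it can simply be cited.
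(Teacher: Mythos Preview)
Your approach is correct and matches the paper's, which simply records that the corollary follows from Proposition~\ref{prop:hairs}~\ref{item:nonlanding2}; you have spelled out the passage through the implication~\ref{item:unboundedcomponents}~$\Rightarrow$~\ref{item:connectedwithinfinity} of Theorem~\ref{thm:spiderswebsprecise} and the spider's-web obstruction exactly as intended.

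One small factual correction: your parenthetical claim that ``$J(f_a)=\C$ whenever $a\in J(f_a)$'' is false, since $f_a$ may have a cycle of Siegel discs (see the opening of Section~\ref{sec:julia}). This does not affect your argument, however: Proposition~\ref{prop:hairs}~\ref{item:nonlanding2} gives density in $J(f_a)$, and since $E=J(f_a)\setminus I(f_a)\subset J(f_a)$, the unbounded connected sets are a fortiori dense in $E$, which is all you use. Simply delete the ``$=\C$'' assertion and the proof stands.
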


  Recall that, when $f_a$ is postsingularly finite, the radial Julia set coincides with the set of non-escaping points (Corollary \ref{cor:radialJ}(c)).
   Thus we obtain, in particular, the statement concerning $\radial(f_a)$ made in  the introduction. 
 
 \section{Fatou's function}
 \label{section_fatou}

  We now turn to studying Fatou's function 
    \begin{equation}\label{eqn:fatousfunction}
      f\colon \C\to \C; \qquad z\mapsto z+1+e^{-z}. \end{equation}
  Observe that  $f$ is semiconjugate to 
      \begin{equation}\label{eqn:fatousemiconjugate}
        h\colon \C\to \C; \qquad \zeta \mapsto e^{-1}\zeta e^{-\zeta} \end{equation}
      via the correspondence $\zeta = g(z) \defeq \exp (-z)$. 
  
   It is well-known that $J(f)$ is a Cantor bouquet while $F(f)$ consists of a single domain in which the
    iterates tend to infinity; see e.g.\ 
     \cite[Theorem 1.3]{alhabib-rempe15}. Hence it again makes sense to speak about \emph{hairs} and
     \emph{endpoints} of $J(f)$. Moreover, also by \cite[Theorem 1.3]{alhabib-rempe15}, all
     points on hairs belong to  $A(f)$; in  other words, all points in $\meandering(f)$ are endpoints. 
     We refer to~\cite{Evdoridou16} for further background. 
    
    As noted in the introduction, the following implies
    \cite[Theorems 1.1 and 5.2]{Evdoridou16}. 

 \begin{theorem}[Fatou's web revisited]\label{thm:fatousweb}
    Let $f$ be Fatou's function~\eqref{eqn:fatousfunction}. Then the set $F(f)\cup A(f)$ is a spider's web, and 
      its complement $\meandering(f)$~-- i.e., the set of meandering endpoints of $f$~-- together with infinity forms a totally separated set. 
  \end{theorem}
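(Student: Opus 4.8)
The plan is to reduce the statement about Fatou's function to the exponential case already established in Theorem~\ref{thm:maindisjointtype} (and its spider's-web counterpart, Corollary~\ref{cor:fastsw}) by exploiting the semiconjugacy $g(z) = \exp(-z)$ between $f$ and the map $h(\zeta) = e^{-1}\zeta e^{-\zeta}$ recorded in~\eqref{eqn:fatousemiconjugate}. The key observation is that $h$ is itself conjugate to an exponential map $f_{a}$ with $a \in (-\infty,-1)$: indeed, writing $\zeta = e^w$ one finds $h$ is semiconjugate to a map of the form $w \mapsto e^w + a$, and a direct computation identifies the relevant parameter as a real parameter less than $-1$ (this is the ``idea from~\cite{Evdoridou16}'' alluded to in the introduction). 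Thus the dynamical plane of $f$ is, via $g$ followed by $\log$, a cover of the dynamical plane of some $f_a$ with $a<-1$, and the Cantor-bouquet structures correspond: hairs, endpoints, escaping and fast-escaping sets are all respected by these correspondences (the escaping set of $f$ corresponds to the escaping set of $f_a$ because $\re z \to -\infty$ under iteration of $f$ translates to $|\zeta| = e^{-\re z}\to\infty$).

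Concretely, the steps I would carry out are as follows. First, I would make the conjugacy precise: set up $g(z) = e^{-z}$ and show $g \circ f = h \circ g$, then pass from $h$ to $f_a$ via the exponential substitution, obtaining a value $a = a_0 < -1$. Second, I would transport the relevant sets: show that $g$ carries $F(f)$, $J(f)$, $A(f)$, $I(f)$, $E(f)$ of Fatou's function to the corresponding sets of $h$ (hence of $f_a$), using that $g$ is a covering map $\C \to \CS$ and that $\infty$ for $f$ corresponds to $0$ for $h$ while $\infty$ for $h$ corresponds to $0$ for $f$. Third, I would invoke Corollary~\ref{cor:fastsw}: for $a = a_0 < -1$ we have $A_R(f_{a_0}) \cup F(f_{a_0})$ is a spider's web, so in particular (by Theorem~\ref{thm:spiderswebsprecise}, condition~\ref{item:pointseparation}) it separates every finite point from $\infty$. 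I would then pull this separation structure back through $g$ and the logarithm to conclude that $A(f) \cup F(f)$ separates every point of $\C$ from $\infty$, and that it has a dense family of unbounded connected subsets (the Fatou set $F(f)$ being a connected unbounded set whose full preimage under the relevant covering is dense); applying Theorem~\ref{thm:spiderswebsprecise} again yields that $F(f)\cup A(f)$ is a spider's web. Finally, for the total separation of $\meandering(f)\cup\{\infty\}$, I would combine the fact that $\meandering(f) = E(f)\setminus A(f)$ (using that all points on hairs lie in $A(f)$, by \cite[Theorem 1.3]{alhabib-rempe15}) with the total separation of $E(f)$ (again from \cite{alhabib-rempe15}) and Lemma~\ref{lem:totseparatedCh}, exactly as in the proof of Corollary~\ref{cor:totsep}, where the hypothesis needed from Lemma~\ref{lem:totseparatedCh} — that every meandering endpoint is separated from $\infty$ in $\meandering(f)\cup\{\infty\}$ — is supplied by the spider's web property of the complement $F(f)\cup A(f)$.

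The main obstacle I anticipate is the careful bookkeeping across the two correspondences — $g(z) = e^{-z}$ is a covering, not a homeomorphism, and the logarithm substitution relating $h$ to $f_a$ is also many-to-one — so one must check that ``separated from $\infty$'' genuinely lifts: a clopen set $U$ in $(\C\setminus E_a)\cup\{z,\infty\}$ witnessing separation for $f_a$ must be pulled back to a clopen set of the right kind for $f$. The cleanest route is probably not to lift clopen sets directly but to transport the spider's-web \emph{domains} $G_n$: bounded simply connected domains with $\partial G_n$ in the spider's web for $f_a$ can be pulled back, via $\log$ and then $g^{-1}$, to bounded simply connected domains exhausting $\C$ with boundaries in $F(f)\cup A(f)$, after choosing the exhaustion to avoid the branch issues (e.g., taking $G_n$ to contain the relevant ramification/critical data once $n$ is large). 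Once the domains are transported, the spider's-web conclusion for $f$ is immediate from the definition, and everything else is routine. A secondary point requiring a line or two is verifying that $A(f)$ corresponds to $A(f_a)$ under the correspondences; this follows from the fact that fast escape is characterised by comparison with the iterated maximum modulus, which is preserved (up to the uniform shift permitted in the definition~\eqref{eqn:fastescaping}) under the exponential change of coordinates, since $\log$ conjugates the relevant growth scales — and all exponential-type maps share the same iterated-exponential growth rate by Lemma~\ref{lem:exponentialgrowth}.
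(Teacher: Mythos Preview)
Your overall strategy---reduce to the exponential case via the semiconjugacy $g(z)=e^{-z}$, then transport the spider's-web structure back---matches the paper's, and your later steps (lifting the exhausting domains $G_n$, invoking Lemma~\ref{lem:totseparatedCh}) are sound. But your key reduction step contains a genuine error: the map $h(\zeta)=e^{-1}\zeta e^{-\zeta}$ is \emph{not} semiconjugate to an exponential map $f_a$ via the substitution $\zeta=e^w$. Carrying out that computation gives $h(e^w)=e^{w-1-e^w}$, so the lift of $h$ under $\exp$ is $w\mapsto w-1-e^w$, which is (up to a sign change of variable) Fatou's function again, not a map of the form $e^w+a$. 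Indeed, composing your two substitutions gives $w=\log g(z)=-z$, so you have simply returned to where you started. More structurally, $h$ has \emph{two} singular values (the asymptotic value $0$ and the critical value $h(1)=e^{-2}$), while every $f_a$ has exactly one, so no holomorphic conjugacy between $h$ and any $f_a$ can exist.

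The paper fills this gap with a genuinely different tool. After the affine change $w=-\zeta-1$ conjugating $h$ to $\tilde h(w)=(w+1)e^w-1$, it invokes the rigidity theorem of~\cite{rempe-09} to obtain a \emph{quasiconformal} homeomorphism $\phi$ conjugating $\tilde h$ and $f_{-2}$ on their Julia sets (both maps being of disjoint type with a single exponential tract). One then checks that $\phi$ carries $A(f_{-2})$ to $A(\tilde h)$; this is where the H\"older continuity of quasiconformal maps and the iterated-exponential growth estimates of Lemma~\ref{lem:exponentialgrowth} enter, roughly as you anticipated in your final paragraph but applied to $\phi$ rather than to a logarithm. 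With this homeomorphism in hand, Corollaries~\ref{cor:fastsw} and~\ref{cor:totsep} transfer to $h$, and the rest of your plan goes through, together with the observation that $[0,\infty)\subset F(h)$ so that the branches of $g^{-1}$ on the slit plane place $\meandering(f)$ inside countably many homeomorphic copies of $\meandering(h)$ mutually separated by the horizontal lines $\im z\in 2\pi\Z$. The paper also remarks that one could avoid~\cite{rempe-09} entirely by re-running the proof of Theorem~\ref{thm:fastseparation} directly for $h$ or for $f$; that would be more elementary, but it is not the route you proposed.
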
  

  We could prove this theorem by mimicking the proof of Theorem~\ref{thm:maindisjointtype}. Instead,
    let us show that the latter in fact implies Theorem~\ref{thm:fatousweb}, using the semiconjugacy between
    $f$ and $h$, together with known
    (albeit non-elementary) results. 

  \begin{proposition}[Structure of $h$]\label{prop:h}
    Let $h$ be as in~\eqref{eqn:fatousemiconjugate}, 
     and let $f_{-2}$ be the exponential map $z\mapsto e^z-2$. Then there is 
     a homeomorphism $\phi\colon \C\to\C$ such that $\phi(J(f_{-2})) = J(h)$,
     $\phi(I(f_{-2})) = I(h)$ and  $\phi(A(f_{-2})) = A(h)$. 

     In particular, $\meandering(h)\cup \{\infty\}$ is totally separated, and the complement of $\meandering(h)$ is a spider's web. 
  \end{proposition}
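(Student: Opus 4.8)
The plan is to construct the homeomorphism $\phi$ rather directly, by building on the known fact that $f_{-2}$ is of \emph{disjoint type}, so that $J(f_{-2})$ is a straight brush and the conjugacy class of $f_{-2}|_{J(f_{-2})}$ is understood via the model of \cite{aartsoversteegen} and \cite{Rem06}. First I would identify $h$ within the exponential family or a closely related setting: $h(\zeta) = e^{-1}\zeta e^{-\zeta}$ is, up to the coordinate change already recorded in the excerpt, conjugate to Fatou's function $f$, and $f$ has a Cantor bouquet Julia set with $F(f)\subset I(f)$ consisting of a single completely invariant Baker domain. In particular $h$ has an attracting fixed point at $\zeta=0$ (note $h(0)=0$, $h'(0)=e^{-1}<1$) whose immediate basin is completely invariant, so $h$ is of disjoint type. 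The key input is then a rigidity/conjugacy statement: any two disjoint-type entire functions of the same (here, finite) order whose Julia sets are Cantor bouquets are quasiconformally — indeed topologically — conjugate on a neighbourhood of their Julia sets, and this conjugacy respects the escaping and fast-escaping sets. I would cite \cite{Rem06} (and, for the identification of $A$ with the set of points on hairs, Proposition~\ref{prop:hairs}(a),(b) together with \cite[Lemma~5.1]{schleicher-zimmer03}) to produce $\phi$ with $\phi(J(f_{-2}))=J(h)$ and $\phi(I(f_{-2}))=I(h)$.

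Next I would upgrade the conjugacy on the Julia set to a homeomorphism of the whole plane. Since both $F(f_{-2})$ (an attracting basin, connected and dense) and $F(h)$ (the attracting basin of $0$) are connected, simply connected at the level of the filled complement, and the dynamics on each basin is the standard linearisable attracting-basin dynamics, one extends $\phi$ over $F(f_{-2})$ to a homeomorphism $\C\to\C$; here one uses that a homeomorphism between the boundaries of these basins, equivariant for the dynamics, extends across the basins because each is a countable union of fundamental domains glued along the boundary in a combinatorially prescribed way (alternatively, quote the quasiconformal surgery / straightening statement directly from \cite{Rem06}, which gives a global quasiconformal $\phi$). Then $\phi(A(f_{-2}))=A(h)$ follows because on the Julia set $A$ consists exactly of the points lying on hairs — a purely topological condition preserved by $\phi$ — and off the Julia set neither function has fast-escaping points outside the relevant invariant set in a way that matters: for $f_{-2}$, $F(f_{-2})\cap A(f_{-2})=\emptyset$ since $F(f_{-2})$ is an attracting basin, and likewise for $h$.

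Finally I would deduce the "in particular" clause. By Theorem~\ref{thm:maindisjointtype} (equivalently Corollary~\ref{cor:totsep} with $R=0$, noting $a=-2\in F(f_{-2})$ so $\meandering(f_{-2})=E(f_{-2})\setminus A(f_{-2})$), the set $\meandering(f_{-2})\cup\{\infty\}$ is totally separated; since $\meandering(f_{-2})=J(f_{-2})\setminus A(f_{-2})$ and $\phi$ maps $J(f_{-2})\setminus A(f_{-2})$ onto $J(h)\setminus A(h)=\meandering(h)$, the homeomorphism $\phi$ (extended to $\CR\to\CR$ by $\phi(\infty)=\infty$) carries $\meandering(f_{-2})\cup\{\infty\}$ onto $\meandering(h)\cup\{\infty\}$, and total separation is a topological invariant, so $\meandering(h)\cup\{\infty\}$ is totally separated. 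Dually, the complement $A(f_{-2})\cup F(f_{-2})$ is a spider's web by Corollary~\ref{cor:fastsw}, and being a spider's web is characterised by the purely topological separation property in Theorem~\ref{thm:spiderswebsprecise}; since $\phi$ maps $A(f_{-2})\cup F(f_{-2})$ onto $A(h)\cup F(h)$ homeomorphically and fixes $\infty$, it preserves the property "$E$ separates every finite point from $\infty$", so $A(h)\cup F(h)$ — the complement of $\meandering(h)$ — is a spider's web.

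The main obstacle I anticipate is not the soft topological transfer at the end but the construction of $\phi$ itself: one must be careful that the existing literature (principally \cite{Rem06}) really provides a \emph{global} homeomorphism of $\C$ with all three of the required properties simultaneously — in particular that it respects $A$ and not merely $I$ — rather than only a conjugacy on the Julia set. If a clean citation is unavailable, one would have to assemble the argument from: (i) the straight-brush model giving topological conjugacy of $f_{-2}|_{J(f_{-2})}$ and $h|_{J(h)}$ (both Cantor bouquets with matching combinatorics coming from the single attracting basin), (ii) the identification of $A$ with hairs on the Julia set, which is manifestly conjugacy-invariant, and (iii) extension over the attracting basins using the standard structure of linearisable attracting Fatou components. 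This is the step that carries all the analytic content; everything afterwards is a formal consequence of the results already proved in the preceding sections.
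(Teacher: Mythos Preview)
Your overall strategy for the ``in particular'' clause is sound and matches the paper. The gap is in your argument that $\phi(A(f_{-2}))=A(h)$. You assert that on the Julia set $A$ consists \emph{exactly} of the points lying on hairs, so that membership in $A$ is a purely topological condition preserved by any conjugacy. This is false: Proposition~\ref{prop:hairs}(b) gives only the inclusion of hairs into $A$, not the reverse. There are fast-escaping \emph{endpoints}~--- indeed the introduction records (via \cite[Remark on p.~68]{alhabib-rempe15}) that $E(f_a)\cap A(f_a)$ is large enough for $\infty$ to be an explosion point for it. So ``lying on a hair'' does not characterise $A\cap J$, and a merely topological conjugacy on the Julia set need not send $A(f_{-2})$ onto $A(h)$.

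The paper closes this gap with a different source for $\phi$ and a different argument for $A$. Instead of \cite{Rem06}, it invokes the rigidity theorem of \cite{rempe-09}, which supplies a \emph{quasiconformal} homeomorphism of $\C$ conjugating $f_{-2}$ and $\tilde h(w)=(w+1)e^w-1$ (affinely conjugate to $h$) on their Julia sets; this already gives a global $\phi$ with $\phi(J)=J$ and $\phi(I)=I$, bypassing your extension-over-the-basin step. Preservation of $A$ then comes from analysis rather than topology: quasiconformal maps are H\"older, both functions have finite positive order so that $z\in A$ is equivalent to $\lvert f^{n_0+k}(z)\rvert\geq F^k(T)$ for all $k\geq 0$ and some $n_0$ (cf.\ Lemma~\ref{lem:exponentialgrowth}), and the H\"older bound $\lvert w_n\rvert^{1/\alpha}\leq\lvert z_n\rvert\leq\lvert w_n\rvert^{\alpha}$ combined with the elementary inequality $F(T^{\alpha})>F(T)^{\alpha}$ for large $T$ shows this growth condition holds for the orbit of $z$ if and only if it holds for the orbit of $\phi(z)$. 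That H\"older step is the piece your proposal is missing.
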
 
   \begin{proof}
     The function $h$ is conjugate to $\tilde{h}\colon w\mapsto (w+1) e^w -1$, via 
        $w = -\zeta - 1$, so it is enough to prove the claim  for $\tilde{h}$.
         It is shown in \cite{rempe-09} that there is a quasiconformal homeomorphism  $\tilde{\phi}$ that   
         conjugates $f_{-2}$ and $\tilde{h}$ on their Julia sets (see \cite[Figure~1]{rempe-09}); in particular,
      $\tilde{\phi}(J(f_{-2})) = J(\tilde{h})$ and $\tilde{\phi}(I(f_{-2})) = I(\tilde{h})$. Since quasiconformal
       maps are H\"older, it also follows that $\tilde{\phi}(A(f_{-2})) = A(\tilde{h})$. 

   Indeed, recall from Lemma~\ref{lem:exponentialgrowth} that $z\in A(f_{-2})$ if and only if 
     for some, and hence all, $T>0$ 
     there is $n_0\geq 0$ such that $\lvert f_{-2}^{n_0+k}(z) \rvert \geq F^k(T)$ for all $k\geq 0$.  
     By a similar calculation, the same is true for $h$ and, in fact, any entire function of positive lower order and
     finite upper order. (Compare e.g.\ \cite[Lemma~3.4]{alhabib-rempe15}.) 

   Now let $z\in I(f_{-2})$ and  $w \defeq \tilde{\phi}(z)\in I(\tilde{h})$, and denote the orbits of these
     points under the correspoding maps by $(z_n)_{n\geq 0}$ and  $(w_n)_{n\geq 0}$, respectively. 
      Since $\tilde{\phi}$ is H\"older, there is $\alpha >1$
        such that
      \[ \lvert w_n \rvert^{1/\alpha} \leq \lvert z_n \rvert \leq  \lvert w_n \rvert^{\alpha}. \] 
   
    By an elementary calculation, 
         $F(T^{\alpha}) > F(T)^{\alpha}$ for all sufficiently large $T$. It follows that
        $z\in A(f_{-2})$ if and only if $w\in A(\tilde{h})$, as claimed. 
        
Recall that $\meandering(E)= J(E)\setminus A(E)$ by definition, for any entire function $E$; so also 
   $\tilde{\phi}(\meandering(f_{-2}))= \meandering(\tilde{h})$. The final claim now follows from
    Corollaries \ref{cor:fastsw} and \ref{cor:totsep}.
   \end{proof} 

  \begin{proof}[Proof of Theorem~\ref{thm:fatousweb}]
    Let $g\colon z\mapsto \exp(-z)$ be the semiconjugacy between $f$ and $h$, so
       $g\circ f = h\circ g$. 
     By~\cite[Theorems~1 and~5]{bergweiler-hinkkanen99}, we have
       $g(J(f)) \subset J(h)$ and 
      $g^{-1}(A(h))~\subset~A(f)$. 
     (It is well-known that actually
      $g(J(f)) = J(h)$, which also follows from \cite{bergweiler-hinkkanen99} and 
      the fact that $A(f)\subset J(f)$, but we do not require this here.) 

    Hence 
       \[ \meandering(f) = J(f) \setminus A(f) \subset g^{-1}( J(h) \setminus A(h)) = g^{-1}(\meandering(h)).  \] 
     Since $h(x) = x\cdot e^{-(x+1)} < x$ for $x>0$, we see that 
         $[0,\infty)\in F(h)$. Taking inverse branches of $g$ on the slit plane $\C\setminus [0,\infty)$, 
        we therefore see that $\meandering(f)$ is contained in a countable collection of homeomorphic
        copies of  $\meandering(h)$, which are mutually separated from each other by the horizontal lines
        whose imaginary parts are even multiplies of $\pi$. 

Since $A(h)\cup F(h)$ is a spider's web (Proposition~\ref{prop:h}) it follows from Theorem~\ref{thm:spiderswebsprecise} that it separates every finite point from $\infty$. Hence the above argument implies that $A(f) \cup F(f)$ separates every finite point from $\infty$ and since it is connected it is also a spider's web.
  \end{proof}

\newcommand{\etalchar}[1]{$^{#1}$}
\providecommand{\bysame}{\leavevmode\hbox to3em{\hrulefill}\thinspace}

\providecommand{\href}[2]{#2}

 \end{document}